		\newtheorem{theorem}[algorithm]{Theorem}
		\newtheorem{lemma}[algorithm]{Lemma}
		\newtheorem{proposition}[algorithm]{Proposition}
		\newtheorem{corollary}[algorithm]{Corollary}
		\newtheorem{definition}[algorithm]{Definition}
		\newtheorem{remark}[algorithm]{Remark}
		\newtheorem{example}[algorithm]{Example}
	\newenvironment{proof}{
		\goodbreak\par
		\textit{Proof.}%
	}{%
		\nopagebreak
		\hfill{\vrule width 1ex height 1ex depth 0ex}
		\medskip
		\goodbreak
	}
	\newcommand{\sizedescriptor}[2]
	{
		\ifthenelse{\equal{#1}{0}}{}{
		\ifthenelse{\equal{#1}{1}}{\big}{
		\ifthenelse{\equal{#1}{2}}{\Big}{
		\ifthenelse{\equal{#1}{3}}{\bigg}{
		\ifthenelse{\equal{#1}{4}}{\Bigg}{
		#2}}}}}
	}
	\newcommand{\ie}[1][~]{i.e.{#1}}
	\newcommand{\cf}[1][~]{cf.{#1}}
	\newcommand{\df}[1]{\emph{#1}}
	\newcommand{\ism}{\cong}
	\newcommand{\dfeq}{\mathrel{\mathop:}=}
	\newcommand{\dfeqrev}{=\mathrel{\mathop:}}
	\newcommand{\sepeq}{\ =\ }
	\newcommand{\sepdfeq}{\quad\dfeq\quad}
	\newcommand{\impl}{\Rightarrow}
	\newcommand{\revimpl}{\Leftarrow}
	\newcommand{\im}{\mathrm{im}}
	\newcommand{\xall}[3]{\forall\, #1 \,{\in}\, #2\,.\,#3}
	\newcommand{\xsome}[3]{\exists\, #1 \,{\in}\, #2\,.\,#3}
	\DeclareDocumentCommand{\set}{O{auto} m G{\empty}}{ \sizedescriptor{#1}{\left} \{{#2} \ifthenelse{\equal{#3}{}}{}{ \; \sizedescriptor{#1}{\middle}| \; {#3}} \sizedescriptor{#1}{\right}\}}
	\newcommand{\inhpst}{\mathscr{P}_+}
	\newcommand{\NN}{\mathbb{N}}
	\newcommand{\ZZ}{\mathbb{Z}}
	\newcommand{\QQ}{\mathbb{Q}}
	\newcommand{\RR}{\mathbb{R}}
	\newcommand{\intcc}[3][\RR]{{#1}_{[#2, #3]}}
	\newcommand{\intco}[3][\RR]{{#1}_{[#2, #3)}}
	\DeclareDocumentCommand{\ch}{O{*} m G{\empty}}{C_{#1}\left(#2\ifthenelse{\equal{#3}{}}{}{; {#3}}\right)}  % simplicial chains
	\DeclareDocumentCommand{\cy}{O{*} m G{\empty}}{Z_{#1}\left(#2\ifthenelse{\equal{#3}{}}{}{; {#3}}\right)}  % cycles
	\DeclareDocumentCommand{\bd}{O{*} m G{\empty}}{B_{#1}\left(#2\ifthenelse{\equal{#3}{}}{}{; {#3}}\right)}  % boundaries
	\DeclareDocumentCommand{\hm}{O{*} m G{\empty}}{H_{#1}\left(#2\ifthenelse{\equal{#3}{}}{}{; {#3}}\right)}  % homology
	\newcommand{\field}{\mathbb{F}}
	\newcommand{\linspan}[1]{\left\langle{#1}\right\rangle}  % linear span
	\newcommand{\ec}{\chi}  % Euler characteristic
	\newcommand{\cx}{\mathcal{Q}}  % complex
	\newcommand{\scx}{\mathcal{S}}  % subcomplex
	\newcommand{\skl}[2][d]{#2^{(#1)}}  % subskeleton
	\newcommand{\pc}{\mathcal{C}}  % point cloud
	\newcommand{\wsx}{\pc_w}  % weighted simplex
	\newcommand{\rwsx}[1][\alpha]{\pc_{w \leq {#1}}}  % reduced weighted simplex
	\newcommand{\dsk}[1][\alpha]{\skl{\rwsx[#1]}}  % d-skeleton of the reduced weighted simplex
	\newcommand{\tw}{\mathrm{tw}}  % total weight
	\newcommand{\stage}[1][\alpha]{Q(\pc; #1)}  % stage in a filtration
	\newcommand{\filt}{\big(\stage\big)_{\alpha \in \RR}}  % filtration
	\newcommand{\mst}{\mathtt{MST}^{(d)}}  % minimal spanning tree
	\newcommand{\rmst}[1][\alpha]{\mathtt{MST}_{#1}^{(d)}}  % reduced minimal spanning tree
	\newcommand{\armst}[1][\alpha]{\widetilde{\mathtt{MST}}_{#1}^{(d)}}  % alternative version of reduced minimal spanning tree
	\newcommand{\hps}{\mathtt{HoPeS}^{(d)}}  % homologically persistent skeleton
	\newcommand{\rhps}[1][\alpha]{\mathtt{HoPeS}^{(d)}_{#1}}  % reduced homologically persistent skeleton
	\newcommand{\birth}{\textrm{birth}}
	\newcommand{\death}{\textrm{death}}
	\newcommand{\crit}[1][\alpha]{\widetilde{\mathcal{K}}_{#1}}  % critical faces, living or just died at a given time
	\newcommand{\critl}[1][\alpha]{\mathcal{K}_{#1}}  % critical faces, living at a given time
	\newcommand{\eR}{\overline{\RR}}  % extended reals
	\newcommand{\inford}{\sqsubseteq}  % information order
	\newcommand{\pd}{\mathrm{PD}}  % persistence diagram
	\newcommand{\pdwsx}{\pd_d(\wsx)}  % persistence diagram of a weighted simplex
	\newcommand{\cech}[1][\alpha]{\mathrm{\text{\v{C}ech}}(\pc; #1)}  % Čech complex
	\newcommand{\vr}[1][\alpha]{\mathrm{VR}(\pc; #1)}  % Vietoris-Rips complex
\title{A Higher-Dimensional Homologically Persistent Skeleton}
\author{
	Sara Kali\v{s}nik Verov\v{s}ek\thanks{Max Planck Institute for Mathematics in the Sciences, sara.kalisnik@mis.mpg.de},
	Vitaliy Kurlin\thanks{Materials Innovation Factory, University of Liverpool, vkurlin@liverpool.ac.uk},
	Davorin Le\v{s}nik\thanks{Department of Mathematics, University of Ljubljana, davorin.lesnik@fmf.uni-lj.si
		(this author was partially supported by the Air Force Office of Scientific Research, Air Force Materiel Command, USAF under Award No.~FA9550-14-1-0096)}
}
\date{}
\begin{document}
	
	\maketitle
	\vspace{-1cm}
%	\linenumbers
	
	\begin{abstract}
		Real data is often given as a point cloud, \ie a finite set of points with pairwise distances between them. An important problem is to detect the topological shape of data --- for example, to approximate a point cloud by a low-dimensional non-linear subspace such as an embedded graph or a simplicial complex. Classical clustering methods and principal component analysis work well when given data points split into well-separated clusters or lie near linear subspaces of a Euclidean space.
\bigskip
		
		Methods from topological data analysis in general metric spaces  detect more complicated patterns such as holes and voids that persist for a long time in a 1-parameter family of shapes associated to a cloud. These features can be visualized in the form of a $1$-dimensional homologically persistent skeleton, which optimally extends a minimal spanning tree of a point cloud to a graph with cycles. We generalize this skeleton to higher dimensions and prove its optimality among all complexes that preserve topological features of data at any scale. 
	\end{abstract}
	
	\pagebreak
	\tableofcontents
	\pagebreak
	\section{Introduction}\label{SECTION: Introduction}
	
	%1.1-------
	\subsection{Motivations and Data Skeletonization Problem}\label{sub:motivations}
		
		Real data is often unstructured and comes in the form of a non-empty finite metric space, called a \df{point cloud}. Such a point cloud can consist of points in 2D images or of high-dimensional vector descriptors of a molecule. A typical problem is to study interesting groups or clusters within data sets.
		
		Real data rarely splits into well-separated clusters, though it often has an intrinsic low-dimensional structure. For example, the point cloud of mean-centered and normalized $3 \times 3$ patches in natural grayscale images has its 50\% densest points distributed near a $2$-dimensional Klein bottle in a $7$-dimensional space~\cite{naturalimages}. This example motivates the following problem.
		
		\smallskip\noindent
		\textbf{Data Skeletonization Problem}.
		Given a point cloud $\pc$ in a metric space $M$, find a complex $\scx \subseteq M$ of a minimal weight to approximate $\pc$ geometrically and topologically in a way that the inclusions of certain subcomplexes of $\scx$ into \df{offsets} of $\pc$ (unions of balls with a fixed radius $\alpha$ and centers at points of $\pc$) induce homology isomorphisms up to a given dimension for all $\alpha$.
		\smallskip
		
		The above problem is harder than describing the topological shape of a point cloud. Indeed, for a noisy random sample $\pc$ of a circle, we aim not only to detect a circular shape $\pc$, but also to approximate an unknown circle by a $1$-dimensional graph $\scx$ that should have exactly one cycle and be close to $\pc$.
		
		To tackle the $1$-dimensional case, Vitaliy Kurlin~\cite{K15} introduced a homologically persistent skeleton (HoPeS) whose cycles are in 1-1 correspondence with all $1$-dimensional persistent homology classes of a given data. The current paper extends the construction and optimality of HoPeS to higher dimensions.
		
		%1.2-------
		\subsection{Review of Closely Related Past Work}\label{sub:review}
			
			A metric graph reconstruction is related to the data skeletonization problem above. The output is an abstract metric graph or a higher-dimensional complex, which should be topologically similar to an input point cloud $\pc$, but not embedded into the same space as $\pc$, which makes the problem easier.
			
			The classical Reeb graph is such an abstract graph defined for a function $f\colon \cx \to \RR$, where $\cx$ is a simplicial complex built on the points of a given point cloud $\pc$. For example, $\cx$ can be the Vietoris-Rips complex $\vr$ whose simplices are spanned by any set of points whose pairwise distances are at most $2\alpha$. Using the Vietoris-Rips complex at a fixed scale parameter, X.~Ge~et~al.~\cite{GSBW11} proved that under certain conditions the Reeb graph has the expected homotopy type. Their experiments on real data concluded that `there may be spurious loops in the Reeb graph no matter how we choose the parameter to decide the scale'~\cite[Section~3.3]{GSBW11}.
			
			F.~Chazal~et~al.~\cite{CHS15} defined a new abstract $\alpha$-Reeb graph $G$ of a metric space $X$ at a user-defined scale $\alpha$. If $X$ is $\epsilon$-close to an unknown graph with edges of length at least $8\epsilon$, the output $G$ is $34(\beta_1(G)+1)\epsilon$-close to the input $X$, where $\beta_1(G)$ is the first Betti number of $G$~\cite[Theorem~3.10]{CHS15}. The similarity between metric spaces was measured by the Gromov-Hausdorff distance. The algorithm runs at $O(n\log n)$ for $n$ points in $X$.
			
			Another classical approach is to use Forman's discrete Morse theory for a cell complex with a discrete gradient field when one builds a smaller homotopy equivalent complex whose number of critical cells is minimized by the algorithm in~\cite{LLT04}. T.~Dey~et~al.~\cite{DFW13} built a higher-dimensional Graph Induced Complex GIC depending on a scale $\alpha$ and a user-defined graph that spans a cloud $\pc$. If $\pc$ is an $\epsilon$-sample of a good manifold, GIC has the same homology $H_1$ as the Vietoris-Rips complex on $\pc$ at scales $\alpha \geq 4\epsilon$.
			
			A $1$-dimensional homologically persistent skeleton~\cite{K15} is based on a classical minimal spanning tree (MST) of a point cloud. Higher-dimensional MSTs (also called \df{minimal spanning acycles}) are currently a popular topic in the applied topology community, see~\cite{acycle}.
			
			The recent work by P.~Skraba~et~al.~\cite{STY17} studies higher-dimensional MSTs from a probabilistic point of view in the case of \emph{distinctly} weighted complexes, which helps to simplify algorithms and proofs. In practice, simplices often have equal weights, which is a generic non-singular case. For example, in the filtrations of \v{C}ech, Vietoris-Rips and $\alpha$-complexes any obtuse triangle and its longest edge have the same weight equal to the half-length of the longest edge. 
			If arbitrarily small perturbations are allowed to make weights distinct, $\hps$ would become the entire $d$-skeleton of the complex. The more complicated proofs in the paper for non-distinctly weighted complexes are relevant --- it is what makes $\hps$ reasonably small.
			
			Among the important results by P.~Skraba~et~al. is Theorem~3.23~\cite{STY17}, which  establishes a bijection between the set of weights of $d$-simplices outside of a minimal spanning acycle and the set of birth times in the $d$-dimensional persistence diagram. All further constructions and proofs in our paper substantially extend the ideas behind the $1$-dimensional $\hps$ \cite{K15}.

The excellent review by Erickson \cite{E12} discusses an optimization for representatives of homology classes. 
This paper is different in the sense that the weight of a skeleton is globally optimized subject to homological constraints.

We circumvent the NP-hardness result by Chen and Freedman \cite{CF11} for a smallest homology basis by making sure that subcomplexes are spanning.
		
		%1.3---------------
		\subsection{Contributions to Data Skeletonization}\label{sub:contributions}
			
			Definition~\ref{def:HoPeS} introduces a $d$-dimensional homologically persistent skeleton $\hps(\wsx)$ associated to a point cloud $\pc$ or, more generally, to a weighted complex $\wsx$ built on $\pc$. In comparison with the past methods, $\hps(\wsx)$ does not require an extra scale parameter and solves the Data Skeletonization Problem from Subsection~\ref{sub:motivations} in the following sense. 
			
			For any scale parameter $\alpha$, a certain subcomplex of the full skeleton $\hps(\wsx)$ has the minimal total weight among all (in a suitable sense spanning) subcomplexes that have the homology in dimension $d$ of a given weighted complex $\rwsx$ at the same scale $\alpha$ (Theorem~\ref{THEOREM: fittingness and optimality of reduced skeleton}). 
			
			Subskeletons of $\hps$ geometrically approximate the given cloud $\pc$ due to embeddings $\hps_{\alpha}\subseteq \rwsx$ for every scale $\alpha$. These inclusions induce homology isomorphisms, which
justifies a topologically approximation.
			
			The key ingredient of $\hps(\wsx)$ is a $d$-dimensional minimal spanning tree whose new optimality properties are proved in Theorem~\ref{THEOREM: Optimality of Minimal Spanning Trees}. 
Kalai \cite{kalai} introduced similar spanning trees and proved great enumeration results about these complexes, which now found applications in data science.
		
			The construction of $\hps$ for $d=1$ in \cite{K15} did not explicitly define the death times of critical edges when they have equal weights.
			 Example~\ref{Example:Kurlin} shows that extra care is needed when assigning death times in those cases, which has led to new and complete Definition~\ref{def:deaths_critical_faces}.
The implementation for $\alpha$-complexes in the plane \cite{K15} produced correct outputs due to a duality between persistence in dimensions $0$ and $1$. 			
			For completeness, we give a step-by-step algorithm for minimal spanning trees in high dimensions (Algorithm~\ref{ALGORITHM: Minimal spanning tree}) similar to Kruskal~\cite{kruskal} and P.~Skraba~et~al.~\cite[Algorithm~1]{STY17}.

	%2==================
	\section{Preliminaries}\label{SECTION: Preliminaries}
		
		In this section we briefly go over some basic notions and prove basic statements that we will use later in the paper. We start by settling the notation.
		
		%2.1-------------------------------			
		\subsection{Notation and the Euler Characteristic}\label{sub:notation}
			
			\begin{itemize}
				\item
					Number sets are denoted by $\NN$ (natural numbers), $\ZZ$ (integers), $\QQ$ (rationals), and $\RR$ (reals). We treat zero as a natural number (so $\NN = \set{0, 1, 2, 3,\ldots}$). We denote the set of extended real numbers by $\eR = \set{-\infty} \cup \RR \cup \set{\infty}$.
				\item
					Subsets of number sets, obtained by comparison with a certain number, are denoted by the suitable order sign and that number in the index. For example, $\NN_{< 42}$ denotes the set $\set{n \in \NN}{n < 42} = \set{0, 1, \ldots, 41}$ of all natural numbers smaller than $42$, and $\RR_{\geq 0}$ denotes the set $\set{x \in \RR}{x \geq 0}$ of non-negative real numbers.
				\item
					Intervals between two numbers are denoted by these two numbers in brackets and in the index. Round, or open, brackets $(~)$ denote the absence of the boundary in the set, and square, or closed, brackets $[~]$ its presence; for example $\intco[\NN]{5}{10} = \set{n \in \NN}{5 \leq n < 10} = \set{5, 6, 7, 8, 9}$.
				\item
					In this paper we work exclusively with finite simplicial complexes. That is, whenever we refer to a `complex' (or a `subcomplex'), we mean a finite simplicial one. By a `$k$-complex' (or a `$k$-subcomplex') we mean a complex of dimension $k$ or smaller. If $\cx$ is a complex, we denote its $k$-skeleton by $\skl[k]{\cx}$.
					
					Formally, we represent any (sub)complex as the set of its simplices (`faces') and any face as the set of its vertices. We will not need orientation for the results in this paper, so this suffices; had we wanted to take orientation into account, we would represent a face as a tuple.
					
					Example of this usage: suppose $\cx$ is a complex, $\scx \subseteq \cx$ and $F \in \cx$. This means that $\scx$ is a subcomplex of $\cx$, $F$ is a face of $\cx$ and $\scx \cup \set{F}$ is the subcomplex of $\cx$, obtained by adding the face $F$ to the subcomplex $\scx$.
				\item
					When we want to refer to the number of $k$-dimensional faces of a complex $\cx$ in a formula, we write $(\# \text{$k$-faces in $\cx$})$.
				\item
					For complexes $\scx \subseteq \cx$ we use $\scx \hookrightarrow \cx$ for the inclusion map. If we have further subcomplexes $\scx' \subseteq \scx$, $\scx' \subseteq \scx'' \subseteq \cx$, we use $(\scx, \scx') \hookrightarrow (\cx, \scx'')$ to denote the inclusion of a pair.
				\item
					\begin{samepage}
					Given $k \in \ZZ$, a unital commutative ring $R$ and a complex $\cx$,
					\begin{itemize}
						\item
							$\ch[k]{\cx}{R}$ stands for the $R$-module of simplicial $k$-chains with coefficients in $R$,
						\item
							$\cy[k]{\cx}{R}$ stands for the submodule of $k$-cycles,
						\item
							$\bd[k]{\cx}{R}$ stands for the submodule of $k$-boundaries,
						\item
							$\hm[k]{\cx}{R}$ stands for the simplicial $k$-homology of $\cx$ with coefficients $R$.
					\end{itemize}
					\end{samepage}
					
					It is convenient to allow the dimension $k$ to be any integer, since we sometimes subtract from it (also, the definition of the $0$-homology does not have to be treated as a special case). Of course, there are no faces of negative dimension, so $\ch[k]{\cx}{R}$, $\cy[k]{\cx}{R}$, $\bd[k]{\cx}{R}$ and $\hm[k]{\cx}{R}$ are all trivial modules whenever $k < 0$.
					
					The boundary maps between chains are denoted by
					\[\partial_k\colon \ch[k]{\cx}{R} \to \ch[k-1]{\cx}{R}.\]
					Given a subcomplex $\scx \subseteq \cx$, these induce boundary maps, defined on the relative homology,
					\[\partial_k\colon \hm[k]{\cx, \scx}{R} \to \hm[k-1]{\scx}{R}.\]
					
					Unless otherwise stated all homologies that we consider in this paper are assumed to be over a given field $\field$, \ie $\hm[k]{\cx}$ stands for $\hm[k]{\cx}{\field}$. Hence $\hm[k]{\cx}$ is a vector space for any $k \in \NN$ and any complex $\cx$; in particular it is free (posseses a basis) and has a well-defined dimension. Since we only consider cases when $\cx$ is a finite complex, the dimension $\beta_k(\cx) \dfeq \dim \hm[k]{\cx}$ (the $k$-th \df{Betti number} of $\cx$) is a natural number, and there exists an isomorphism $\hm[k]{\cx} \ism \field^{\beta_k(\cx)}$.
					
					We freely use the fact that homology is a functor. For a map $f\colon \cx' \to \cx''$ we use $\hm[k]{f}$ to denote the induced map $\hm[k]{\cx'} \to \hm[k]{\cx''}$. (It is common in literature to use the notation $f_*$ for this purpose, but we find it useful to include the dimension in the notation.)
			\end{itemize}
			
			We recall a couple of classical results in topology.
			
			\begin{proposition}\label{PROPOSITION: Euler characteristic}\cite[Chapter~4, Section~3, Corollary~15]{spanier}
				Let $\cx$ be a finite simplicial complex. The alternating sums
				\[\sum_{k \in \NN} (-1)^k (\# \text{$k$-faces in $\cx$}) \qquad \text{and} \qquad \sum_{k \in \NN} (-1)^k \beta_k(\mathcal{Q}) \qquad\qquad\]
				are well defined (all terms with $k > \dim{\cx}$ are zero, so they are effectively finite sums) and equal, regardless of the choice of the field $\field$. The number they are equal to is the \df{Euler characteristic} of $\cx$, and is denoted by $\ec(\cx)$.
			\end{proposition}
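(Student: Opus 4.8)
The plan is to run the classical Euler--Poincar\'{e} rank-counting argument. First I would fix the field $\field$ and record two facts that make everything effectively finite: the chain module $\ch[k]{\cx}$ is free with basis the set of $k$-faces of $\cx$, so $\dim \ch[k]{\cx} = (\# \text{$k$-faces in $\cx$})$, and $\ch[k]{\cx} = \cy[k]{\cx} = \bd[k]{\cx} = 0$ whenever $k < 0$ or $k > \dim \cx$. In particular the first alternating sum in the statement is literally $\sum_{k \in \NN} (-1)^k \dim \ch[k]{\cx}$, which manifestly does not depend on the choice of $\field$.

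Next I would extract, for each $k \in \NN$, two short exact sequences of $\field$-vector spaces. The boundary map $\partial_k \colon \ch[k]{\cx} \to \ch[k-1]{\cx}$ has kernel $\cy[k]{\cx}$ and image $\bd[k-1]{\cx}$, giving $0 \to \cy[k]{\cx} \to \ch[k]{\cx} \to \bd[k-1]{\cx} \to 0$, while the definition of homology gives $0 \to \bd[k]{\cx} \to \cy[k]{\cx} \to \hm[k]{\cx} \to 0$. Because dimension is additive along short exact sequences of finite-dimensional vector spaces (this is the one place finiteness of $\cx$ enters), the two sequences combine into the identity $\dim \ch[k]{\cx} = \dim \bd[k]{\cx} + \beta_k(\cx) + \dim \bd[k-1]{\cx}$, valid for every $k \in \NN$.

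Finally I would multiply this identity by $(-1)^k$ and sum over $k \in \NN$. Reindexing the sum $\sum_k (-1)^k \dim \bd[k-1]{\cx}$ via $k \mapsto k+1$ (and using $\bd[-1]{\cx} = 0$ to discard the stray term) turns it into $-\sum_k (-1)^k \dim \bd[k]{\cx}$, so the two boundary contributions telescope to zero and what remains is $\sum_{k \in \NN} (-1)^k \dim \ch[k]{\cx} = \sum_{k \in \NN} (-1)^k \beta_k(\cx)$. Combined with $\dim \ch[k]{\cx} = (\# \text{$k$-faces in $\cx$})$, this is exactly the asserted equality, and since the left-hand side never mentions $\field$, the common value $\ec(\cx)$ is field-independent.

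I do \emph{not} expect a genuine obstacle here: the whole argument is linear algebra over $\field$. The only steps needing a moment's care are the index bookkeeping at the boundary — checking that the vanishing of $\ch[k]{\cx}$, $\cy[k]{\cx}$, $\bd[k]{\cx}$ for $k < 0$ kills every term that would otherwise spoil the telescoping — and isolating precisely which hypothesis (finiteness of $\cx$, hence finite-dimensionality of all chain groups) licenses additivity of $\dim$ across the short exact sequences.
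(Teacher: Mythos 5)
Your argument is correct: it is the standard Euler--Poincar\'{e} rank-counting proof via the short exact sequences $0 \to \cy[k]{\cx} \to \ch[k]{\cx} \to \bd[k-1]{\cx} \to 0$ and $0 \to \bd[k]{\cx} \to \cy[k]{\cx} \to \hm[k]{\cx} \to 0$, with the telescoping handled properly. The paper does not prove this proposition itself but cites it from Spanier, and your proof is essentially the classical argument given in that reference, so there is nothing to compare beyond noting agreement.
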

			
			\begin{corollary}\label{COROLLARY: adding a face}\cite[Section~3]{EdelsbrunnerFaces}
				Let $\cx$ be a finite simplicial complex, $\scx$ a subcomplex, $k \in \NN$ and $F$ a $k$-face in $\cx$ which is not in $\scx$. Then either
				\begin{itemize}
					\item
						$\beta_{k-1}(\scx \cup \set{F}) = \beta_{k-1}(\scx) - 1$ (``$F$ kills a dimension in $H_{k-1}$'') or
					\item
						$\beta_k(\scx \cup \set{F}) = \beta_k(\scx) + 1$ (``$F$ adds a dimension to $H_k$''),
				\end{itemize}
				while in each case all other Betti numbers are the same for $\scx$ and $\scx \cup \set{F}$.
			\end{corollary}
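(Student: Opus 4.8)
The plan is to compare the simplicial chain complexes of $\scx$ and $\scx \cup \set{F}$, and then transfer the comparison to homology through the long exact sequence of the pair $(\scx \cup \set{F}, \scx)$.

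First I would note that, since $\scx \cup \set{F}$ is required to be a simplicial complex, every proper face of $F$ already lies in $\scx$. Hence $\scx$ and $\scx \cup \set{F}$ have exactly the same faces in every dimension except dimension $k$, where $\scx \cup \set{F}$ has precisely one more, namely $F$. Therefore the relative chain groups $\ch[j]{\scx \cup \set{F}, \scx} = \ch[j]{\scx \cup \set{F}} / \ch[j]{\scx}$ are trivial for $j \neq k$ and $1$-dimensional (spanned by the image of $F$) for $j = k$. A chain complex concentrated in a single degree is its own homology, so $\hm[k]{\scx \cup \set{F}, \scx} \ism \field$ and $\hm[j]{\scx \cup \set{F}, \scx} = 0$ for every $j \neq k$.

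Next I would feed this into the long exact homology sequence of the pair. For each $j \notin \set{k-1, k}$ the relative groups in degrees $j$ and $j+1$ both vanish, so the inclusion induces an isomorphism $\hm[j]{\scx} \ism \hm[j]{\scx \cup \set{F}}$, and in particular $\beta_j$ is unchanged. The only nontrivial piece is the exact sequence
\[0 \to \hm[k]{\scx} \to \hm[k]{\scx \cup \set{F}} \to \field \xrightarrow{\ \partial_k\ } \hm[k-1]{\scx} \to \hm[k-1]{\scx \cup \set{F}} \to 0,\]
in which the connecting homomorphism $\partial_k$ has a $1$-dimensional domain and is therefore either zero or injective. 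If $\partial_k = 0$, exactness forces the map $\hm[k]{\scx \cup \set{F}} \to \field$ to be surjective and the map $\hm[k-1]{\scx} \to \hm[k-1]{\scx \cup \set{F}}$ to be an isomorphism, which yields $\beta_k(\scx \cup \set{F}) = \beta_k(\scx) + 1$ with $\beta_{k-1}$ unchanged. If $\partial_k$ is injective, then $\hm[k]{\scx} \ism \hm[k]{\scx \cup \set{F}}$ while $\hm[k-1]{\scx \cup \set{F}} \ism \hm[k-1]{\scx} / \im \partial_k$, which yields $\beta_{k-1}(\scx \cup \set{F}) = \beta_{k-1}(\scx) - 1$ with $\beta_k$ unchanged. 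These are exactly the two alternatives in the statement, and in both the remaining Betti numbers agree by the previous paragraph.

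I do not expect a genuine obstacle here: once the relative complex has been identified the rest is formal bookkeeping, and the only point where the simplicial hypothesis is really used is that adding $F$ creates no new faces in dimensions $k \pm 1$, so that $\hm[*]{\scx \cup \set{F}, \scx}$ is concentrated in degree $k$. As an independent sanity check one may run the computation by hand: $\partial F$ is automatically a $(k-1)$-cycle of $\scx$, and according to whether it is also a $(k-1)$-boundary there, either the $k$-cycles of $\scx \cup \set{F}$ gain one dimension (with the boundary groups and the $(k-1)$-cycles unchanged, so $\beta_k$ rises by one) or the $(k-1)$-boundaries gain one dimension (with the $k$-cycles and $k$-boundaries unchanged, so $\beta_{k-1}$ drops by one); in either case the change $(-1)^k$ in the Euler characteristic agrees with Proposition~\ref{PROPOSITION: Euler characteristic} applied to a single added $k$-face.
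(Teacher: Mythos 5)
Your proof is correct, and there is nothing in the paper to compare it against line by line: the paper states this corollary with only a citation to \cite[Section~3]{EdelsbrunnerFaces} and gives no proof of its own. Your argument is the standard one and is complete: you correctly observe that the implicit hypothesis behind the statement is that $\scx \cup \set{F}$ is again a complex, so all proper faces of $F$ lie in $\scx$ and the relative chain complex of the pair $(\scx \cup \set{F}, \scx)$ is one-dimensional and concentrated in degree $k$; the long exact sequence then forces exactly the dichotomy according to whether the connecting map $\field \to \hm[k-1]{\scx}$ is zero or injective, with all other Betti numbers unchanged because the relative homology vanishes away from degree $k$. The cited source argues in the incremental style of your closing ``sanity check'' --- adding $F$ either creates a new $k$-cycle or makes the $(k-1)$-cycle $\partial F$ into a boundary, according to whether $\partial F$ already bounds in $\scx$ --- so your long-exact-sequence formulation is an equivalent, slightly more formal packaging of the same dichotomy; it buys you the vanishing of the other relative groups (hence the ``all other Betti numbers agree'' clause) for free, whereas the elementary count has to note separately that the spaces of cycles and boundaries in the remaining dimensions are untouched. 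The consistency check against the Euler characteristic (Proposition~\ref{PROPOSITION: Euler characteristic}) is a nice touch but not needed for the proof.
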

			
		%2.2-------------------------------
		\subsection{Fitting and Spanning Trees and Forests}\label{sub:subcomplexes}
			
			In order to generalize a 1-dimensional homologically persistent skeleton based on a Minimal Spanning Tree to an arbitrary dimension, we need higher-dimensional analogues of spanning forests and trees. We also define the notion of `fittingness' of a subcomplex.
			
			\begin{definition}\label{DEFINITION: Types of subcomplexes}
				Let $k \in \NN$. Let $\cx$ be a simplicial complex and $\scx$ a $k$-subcomplex of $\cx$.
				\begin{itemize}
					\item
						$\scx$ is \df{$k$-spanning} (in $\cx$) when $\skl[k-1]{\scx} = \skl[k-1]{\cx}$, \ie the $(k-1)$-skeleton of $\scx$ is the entire $(k-1)$-skeleton of $\cx$.
					\item
						$\scx$ is a \df{$k$-forest} (in $\cx$) when $\hm[k]{\scx} = 0$.
					\item
						$\scx$ is a \df{$k$-tree} (in $\cx$) when it is a $k$-forest and $\hm[k-1]{\scx \to \bullet}$ is an isomorphism.\footnote{Here $\bullet$ denotes a singleton, so there is a unique map $\scx \to \bullet$. If $k \neq 1$, the condition for $\scx$ being a $k$-tree simplifies to $\hm[k]{\scx} = \hm[k-1]{\scx} = 0$. For $k = 1$, the induced map $\hm[k-1]{\scx \to \bullet}$ is an isomorphism if and only if $\scx$ has exactly one connected component.}
					\item
						$\scx$ is \df{$k$-fitting} (in $\cx$) when $\hm[i]{\scx \hookrightarrow \cx}$ is an isomorphism for all $i \in \NN_{\leq k}$.
				\end{itemize}
				For the sake of simplicity, we shorten `$k$-spanning $k$-forest' to a `spanning $k$-forest' (or to `spanning forest', when $k$ is understood). We proceed similarly with trees.
			\end{definition}
			
			Note that every subcomplex, including $\emptyset$, is $0$-spanning, since the $(-1)$-skeleton is empty. Also, $\emptyset$ is the only $0$-forest and the only $0$-tree.
			
			\begin{example}
				Let $T$ be the set of all non-empty subsets of a set with four elements, \ie a geometric realization of $T$ is a tetrahedron. Then $T$ is a spanning $3$-tree of itself. Figure~\ref{FIGURE: Tetrahedron} depicts two spanning $2$-trees of $T$.
				\begin{figure}[!ht]
					\centering
					\includegraphics[scale=1]{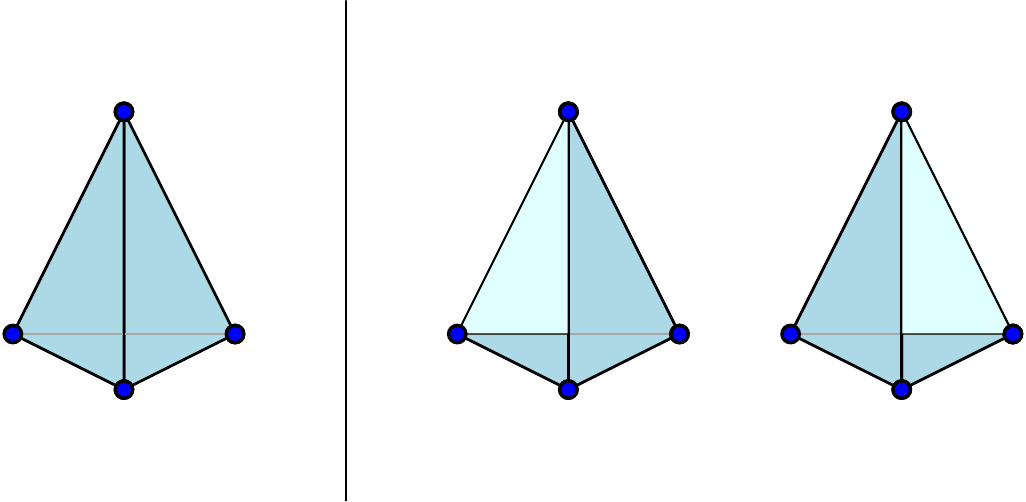}
					\caption{Geometric realization of a tetrahedron $T$ (left) and two of its spanning $2$-trees (right).}\label{FIGURE: Tetrahedron}
				\end{figure}
			\end{example}
			
			\begin{remark}
				The concepts in Definition~\ref{DEFINITION: Types of subcomplexes} were inspired by~\cite{10.2307/1997562, 2015arXiv150606819D}.
				However, the data skeletonization problem in subsection~\ref{sub:motivations} a different version of a high-dimensional tree.
In particular, the definition of a $k$-forest in~\cite{2015arXiv150606819D} was given in an `absolute' sense, as linear independence of the columns of the boundary map $\partial_k$ between $\ZZ$-chains. This is equivalent to $\hm[k]{\scx}{\RR} = 0$ (or more generally, $\hm[k]{\scx}{\field} = 0$ if $\field$ is a field of characteristic $0$). However, we purposefully define forests (and trees) in a `relative' sense (depending on the choice of the field $\field$), as this allows us to prove the results of the paper in greater generality.
			\end{remark}
			
			\begin{remark}
				What we call a spanning $k$-tree some other authors~\cite{acycle, STY17} call a $k$-spanning acycle. This definition originated in Kalai's work~\cite{kalai}. He considered $k$-dimensional simplicial complexes, which contain the entire $(k-1)$-skeleton and for them defined `simplicial spanning trees'.
			\end{remark}
			
			The following lemma establishes basic properties of spanning subcomplexes that we use throughout the paper.
			
			\begin{lemma}\label{LEMMA: spanning subcomplex}
				Let $\cx$ be a finite simplicial complex and $\scx$ a $k$-spanning $k$-subcomplex of $\cx$ for some $k \in \NN$.
				\begin{enumerate}
					\item\label{LEMMA: spanning subcomplex: lower-dimensional fittingness}
						The map $\hm[i]{\scx \hookrightarrow \cx}$ is an isomorphism for all $i \in \NN_{\leq k-2}$ (\ie $\scx$ is $(k-2)$-fitting in $\cx$) and a surjection for $i = k-1$.
					\item\label{LEMMA: spanning subcomplex: formula}
						The formula
						\[
							\Big(\# \text{$k$-faces in $\cx$}\Big) + \beta_{k-1}(\cx^{(k)}) - \beta_k(\cx^{(k)}) \sepeq \Big(\# \text{$k$-faces in $\scx$}\Big) + \beta_{k-1}(\scx) - \beta_k(\scx)
						\]
						holds.
					\item\label{LEMMA: spanning subcomplex: towards fittingness}
						If $\beta_{k-1}(\scx) > \beta_{k-1}(\cx)$, there exists a $k$-face $F$ in $\cx \setminus \scx$ such that
						\[
							\beta_k(\scx \cup \set{F}) = \beta_k(\scx) \quad \text{and} \quad \beta_{k-1}(\scx \cup \set{F}) = \beta_{k-1}(\scx) - 1.
						\]
					\item\label{LEMMA: spanning subcomplex: existence of a nice subforest}
						If $\scx$ is $(k-1)$-fitting in $\cx$, a $k$-subcomplex $F \subseteq \scx$ exists, which is $(k-1)$-fitting $k$-spanning $k$-forest in $\scx$ (and consequently also in $\cx$).
					\item\label{LEMMA: spanning subcomplex: connecting relative and absolute homology}
						Suppose $\scx$ is $(k-1)$-fitting in $\cx$ and $F \subseteq \scx$ is a $(k-1)$-fitting $k$-spanning $k$-forest in $\scx$ (equivalently, in $\cx$). Then the diagram
						\[
							\begin{tikzpicture}[baseline=(current  bounding  box.center), scale = 1]
								\node (Ta) at (5, 2) {$ \hm[k]{\scx}$};
								\node (Tb) at (5,0) {$\hm[k]{\cx}$};
								\node (Tc) at (8.5, 2) {$\hm[k]{\scx, F}$};
								\node (Td) at (8.5, 0) {$\hm[k]{\cx, F}$};
								\path[->]
								(Ta) edge node[left] {{\small $\hm[k]{\scx \hookrightarrow \cx}$}} (Tb)
								(Tc) edge node[right] {{\small $\hm[k]{(\scx, F) \hookrightarrow (\cx, F)}$}} (Td)
								(Ta) edge node[above, yshift=0.5cm] {{\small $\hm[k]{(\scx, \emptyset) \hookrightarrow (\scx, F)}$}} (Tc)
								(Tb) edge node[below, yshift=-0.5cm] {{\small $\hm[k]{(\cx, \emptyset) \hookrightarrow (\cx, F)}$}} (Td);
							\end{tikzpicture}
						\]
						commutes and the horizontal arrows are isomorphisms. Hence the left arrow is an isomorphism if and only if the right one is.
				\end{enumerate}
			\end{lemma}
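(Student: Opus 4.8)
The plan is to prove the five items in order, each leaning on its predecessors, the recurring theme being that $\scx$, $\cx$ and $\skl[k]{\cx}$ all contain the common $(k-1)$-skeleton $\skl[k-1]{\cx}$; hence they have identical chain groups in dimensions $\leq k-1$ and identical cycle groups $\cy[i]{\cdot}$ for $i \leq k-1$, and the only freedom is in the $k$-chains and, through $\partial_k$, in the $(k-1)$-boundaries. For item~\ref{LEMMA: spanning subcomplex: lower-dimensional fittingness}, since $\hm[i]{\cdot}$ is determined by the $(i+1)$-skeleton and for $i \leq k-2$ that skeleton lies in the shared $(k-1)$-skeleton, the inclusion induces an isomorphism; for $i = k-1$ we have $\cy[k-1]{\scx} = \cy[k-1]{\cx}$ and $\bd[k-1]{\scx} \subseteq \bd[k-1]{\cx}$, so $\hm[k-1]{\scx \hookrightarrow \cx}$ is surjective. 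For item~\ref{LEMMA: spanning subcomplex: formula}, since neither $\scx$ nor $\skl[k]{\cx}$ has faces of dimension above $k$, rank--nullity for $\partial_k$ gives $\beta_k = (\#\text{ $k$-faces}) - \mathrm{rk}\,\partial_k$ and $\beta_{k-1} = \dim\cy[k-1]{\cdot} - \mathrm{rk}\,\partial_k$ on each side, so both sides of the asserted identity collapse to $\dim\cy[k-1]{\cdot}$, and these agree because $\cy[k-1]{\scx} = \cy[k-1]{\skl[k]{\cx}}$.

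For item~\ref{LEMMA: spanning subcomplex: towards fittingness} I would argue by contradiction. By Corollary~\ref{COROLLARY: adding a face} (and the underlying rank--nullity bookkeeping), adjoining a $k$-face $F \in \cx \setminus \scx$ to $\scx$ drops $\beta_{k-1}$ by one --- precisely when $\partial_k F \notin \bd[k-1]{\scx}$, and then $\beta_k$ is unchanged --- or else raises $\beta_k$ by one --- precisely when $\partial_k F \in \bd[k-1]{\scx}$. If no $k$-face of $\cx \setminus \scx$ is of the first kind, then every $k$-face of $\skl[k]{\cx}$ has its boundary in $\bd[k-1]{\scx}$, so $\bd[k-1]{\skl[k]{\cx}} = \bd[k-1]{\scx}$ and hence $\beta_{k-1}(\skl[k]{\cx}) = \beta_{k-1}(\scx)$; but $\beta_{k-1}(\skl[k]{\cx}) = \beta_{k-1}(\cx)$ (since $H_{k-1}$ sees only the $k$-skeleton), and this is $< \beta_{k-1}(\scx)$ by hypothesis, a contradiction. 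So a $k$-face $F$ of the first kind exists, which is exactly what is wanted.

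Item~\ref{LEMMA: spanning subcomplex: existence of a nice subforest} is the technical core, and I would handle it by a greedy, matroid-style construction. Start from $\skl[k-1]{\scx}$ and run through the $k$-faces of $\scx$ in an arbitrary order, adjoining the current face exactly when its boundary is not yet a $(k-1)$-boundary of the complex built so far (equivalently, when adjoining it creates no new $k$-cycle), and discarding it otherwise; call the result $F$. By construction $F$ is a subcomplex of $\scx$, is $k$-spanning in $\scx$, and satisfies $\hm[k]{F} = 0$, so it is a $k$-forest. For $(k-1)$-fittingness in $\scx$, item~\ref{LEMMA: spanning subcomplex: lower-dimensional fittingness} already gives that $\hm[i]{F \hookrightarrow \scx}$ is an isomorphism for $i \leq k-2$ and a surjection for $i = k-1$, so it remains to prove $\beta_{k-1}(F) = \beta_{k-1}(\scx)$. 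If instead $\beta_{k-1}(F) > \beta_{k-1}(\scx)$, then item~\ref{LEMMA: spanning subcomplex: towards fittingness} applied with $\scx$ as the ambient complex produces a $k$-face $G \in \scx \setminus F$ with $\partial_k G \notin \bd[k-1]{F}$; but the boundary group the greedy process had assembled at the moment $G$ was examined is contained in $\bd[k-1]{F}$, so $G$ would have been adjoined then, contradicting $G \notin F$. Hence $\beta_{k-1}(F) = \beta_{k-1}(\scx)$, the surjection is an isomorphism, $F$ is $(k-1)$-fitting in $\scx$, and composing with the $(k-1)$-isomorphisms of $\scx \hookrightarrow \cx$ (together with the equality of $(k-1)$-skeletons) shows $F$ is a $(k-1)$-fitting $k$-spanning $k$-forest in $\cx$ too.

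For item~\ref{LEMMA: spanning subcomplex: connecting relative and absolute homology}, commutativity of the square is just functoriality of homology applied to the evidently commuting square of pair-inclusions. That the two horizontal maps are isomorphisms follows from the long exact sequences of the pairs $(\scx, F)$ and $(\cx, F)$: in each, $\hm[k]{F} = 0$ because $F$ is a $k$-forest, and in the segment $\hm[k]{\cdot} \to \hm[k]{\cdot, F} \to \hm[k-1]{F} \to \hm[k-1]{\cdot}$ the last map is an isomorphism (as $F$ is $(k-1)$-fitting in $\scx$, hence in $\cx$), so in particular injective; exactness then forces $\hm[k]{\scx} \to \hm[k]{\scx, F}$ and $\hm[k]{\cx} \to \hm[k]{\cx, F}$ to be isomorphisms. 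Since the square commutes and its horizontal edges are isomorphisms, the left vertical arrow is an isomorphism iff the right one is. I expect item~\ref{LEMMA: spanning subcomplex: existence of a nice subforest} to be the main obstacle: the delicate point is arranging the greedy construction so that the exchange argument certifying that $F$ is already maximal in $H_{k-1}$ runs cleanly, and this rests on the monotonicity $\bd[k-1]{\scx'} \subseteq \bd[k-1]{\scx''}$ for $\scx' \subseteq \scx''$, which is precisely what lets a face that gets rejected at a later stage already have been acceptable at an earlier one.
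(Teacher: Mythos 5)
Your proof is correct, and while items (1), (2) and (5) run along essentially the same lines as the paper (skeleton-sharing for low dimensions, a rank--nullity/Euler-characteristic count, and the two long exact sequences with $\hm[k]{F}=0$ and the $(k-1)$-fittingness of $F$ killing the boundary maps), your treatment of items (3) and (4) is genuinely different. For (3) the paper picks a \emph{minimal} family of $k$-faces whose addition lowers $\beta_{k-1}$ and argues that minimality forces the family to be a single face; your contrapositive argument --- if no single face of $\cx\setminus\scx$ had $\partial_k F\notin \bd[k-1]{\scx}$ then $\bd[k-1]{\skl[k]{\cx}}=\bd[k-1]{\scx}$, contradicting $\beta_{k-1}(\scx)>\beta_{k-1}(\cx)$ since the $(k-1)$-cycle spaces agree --- is shorter and arguably cleaner, though both are the same piece of linear algebra about boundary spans. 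For (4) the paper works top-down: it chooses a basis of $\cy[k]{\scx}$, solves a linear system to identify $n=\beta_k(\scx)$ ``leading'' $k$-faces, deletes them, checks directly that no $k$-cycle survives, and then gets $(k-1)$-fittingness by the counting consequence of Corollary~\ref{COROLLARY: adding a face} (each deleted face changed $\beta_k$, hence none changed $\beta_{k-1}$). You instead build $F$ bottom-up by the greedy rule ``adjoin a $k$-face iff its boundary is not yet a boundary,'' and certify $(k-1)$-fittingness by an exchange argument: a face supplied by item (3) would have been acceptable already when the greedy process examined it, by monotonicity of the boundary groups. Both arguments are complete; yours is more algorithmic and foreshadows Algorithm~\ref{ALGORITHM: Minimal spanning tree} (it is essentially that construction at a single weight level), while the paper's deletion argument has the side benefit of exhibiting exactly how many $k$-faces are removed, which is the count reused later in Lemma~\ref{LEMMA: optimality of minimal spanning trees}.
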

			
			\begin{proof}
				\begin{enumerate}
					\item
						Follows from the fact that simplicial homology in dimension $i$ depends only on $i$-~and $(i+1)$-dimensional faces, with $i$-faces providing the generators and $(i+1)$-faces the relations.
					\item
						Since $\scx$ is $k$-spanning, it has the same number of faces up to dimension $k-1$ and (per the previous item) the same homologies up to dimension $k-2$. Thus
						\[
							(-1)^k \big(\# \text{$k$-faces in $\cx$} - \# \text{$k$-faces in $\scx$}\big) = \ec(\cx^{(k)}) - \ec(\scx) =
						\]
						\[
							= (-1)^k \beta_k(\cx^{(k)}) + (-1)^{k-1} \beta_{k-1}(\cx^{(k)}) - (-1)^k \beta_k(\scx) - (-1)^{k-1} \beta_{k-1}(\scx).
						\]
						After rearranging the result follows.
					\item
						Let $\set{S_1, S_2, \ldots, S_m}$ be the set of $k$-faces in $\scx$. Consider families of {$k$-faces} in $\cx \setminus \scx$ which, when added to $\scx$, reduce the $(k-1)$-homology (regardless of whether the $k$-th Betti number of the expanded subcomplex changes). By assumption ${\beta_{k-1}(\scx) > \beta_{k-1}(\cx)}$, at least one such family exists, namely the set of \emph{all} $k$-faces in $\cx \setminus \scx$. Let $\mathcal{F} = \set{F_1, F_2, \ldots, F_n}$ be one of such families which contains the minimal possible number of $k$-faces (of course $n \geq 1$). Minimality of $\mathcal{F}$ implies that the image under $\partial_k$ does not change when adding only $n-1$ faces, that is
						\[
							\partial_k\big(\linspan{S_1, S_2, \ldots, S_m}\big) = \partial_k\big(\linspan{S_1, S_2, \ldots, S_m, F_1, F_2, \ldots, F_{n-1}}\big) \dfeqrev B
						\]
						(here $\linspan{~}$ denotes the linear span). Since adding $\mathcal{F}$ to $\scx$ reduces $(k-1)$-homology, a linear combination
						\[
							s \dfeq \sum_{i = 1}^m c_i S_i + \sum_{j = 1}^n d_j F_j
						\]
						exists such that $\partial_k(s) \notin B$. Consequently $\partial_k(F_n) \notin B$, so just adding $F_n$ to $\scx$ reduces homology in dimension $(k-1)$ (implying that $n = 1$). It follows from Corollary~\ref{COROLLARY: adding a face} that $\scx \cup \set{F_n}$ remains a $k$-forest while $\beta_{k-1}(\scx \cup \set{F_n}) = \beta_{k-1}(\scx) - 1$.
					\item
						Let $\set{S_1, S_2, \ldots, S_m}$ be the set of $k$-faces in $\scx$. Since $\scx$ is a $k$-complex, we have $\hm[k]{\scx} \ism \cy[k]{\scx}$ (every equivalence class is a singleton). Let $n \dfeq \beta_k(\scx)$ be the dimension of the vector space of $k$-cycles of $\scx$. Choose a basis $b_1, \ldots, b_n$ of $\cy[k]{\scx}$ and expand these basis elements as
						\[
							b_i = \sum_{j = 1}^m c_{ij} S_j.
						\]
						Consider the system of linear equations
						\[
							\sum_{j = 1}^m c_{ij} x_j = 0.
						\]
						Since a basis is linearly independent, this is a system of $n$ independent linear equations with $m$ variables, where $n \leq m$ (since $\cy[k]{\scx} \subseteq \ch[k]{\scx}$). Thus the system can be solved for $n$ leading variables in the sense that we express them with the remaining $m-n$ ones. Without loss of generality assume that the first $n$ variables are the leading ones. This means that the system can be equivalently written as
						\[
							x_i + \sum_{j = n+1}^m \widetilde{c}_{ij} x_j = 0.
						\]
						Define $\displaystyle{\widetilde{b}_i \dfeq S_i + \sum_{j = n+1}^m \widetilde{c}_{ij} S_j}$; then $\set{\widetilde{b}_i}{i \in \intcc[\NN]{1}{n}}$ is also a basis for $\cy[k]{\scx}$.
						
						Define $F \dfeq \scx \setminus \set{S_i}{i \in \intcc[\NN]{1}{n}}$. Clearly $F$ is $k$-spanning (therefore $(k-2)$-fitting) in $\scx$ and $\cx$. Let $z = \sum_{j = n+1}^m d_j S_j$ be an arbitrary $k$-cycle of $F$. The boundary map has the same definition for $F$ and $\scx$, so $z$ is also a cycle in $\scx$. Expand it as
						\[
							z = \sum_{i = 1}^n e_i \widetilde{b}_i.
						\]
						Since $z$ does not include any $S_j$ for $j \leq n$, necessarily all $e_i$s are zero, and then $z = 0$. We conclude that $F$ is a $k$-forest.
						
						Adding $n$ faces to $F$ to recover $\scx$ increases the dimension of $k$-homology by $n$. Since a change of a $k$-face either modifies the dimension of $k$-homology by one or of $(k-1)$-homology by one (Corollary~\ref{COROLLARY: adding a face}), the $(k-1)$-homology of $F$ remains the same as of $\scx$. That is, $F$ is $(k-1)$-fitting in $\scx$ and $\cx$.
					\item
						The long exact sequence of a pair is natural, so the following diagram commutes.
						\[
							\begin{tikzpicture}[baseline=(current  bounding  box.center), scale = 0.8]
								\node (homa) at (2, 2) {$\overbrace{\hm[k]{F}}^{0}$};
								\node (homb) at (2,0) {$\underbrace{\hm[k]{F}}_{0}$};
								\node (Ta) at (5, 2) {$ \hm[k]{\scx}$};
								\node (Tb) at (5,0) {$\hm[k]{\cx}$};
								\node (Tc) at (8.5, 2) {$\hm[k]{\scx, F}$};
								\node (Td) at (8.5, 0) {$\hm[k]{\cx, F}$};
								\node (Te) at (11.8, 2) {$\hm[k-1]{F}$};
								\node (Tf) at (11.8, 0) {$\hm[k-1]{F}$};
								\node (Tg) at (15.3, 2) {$\hm[k-1]{\scx}$};
								\node (Th) at (15.3, 0) {$\hm[k-1]{\cx}$};
								\path[->]
								(homa) edge node[left]{} (homb)
								(Ta) edge node[right] {} (Tb)
								(Tc) edge node[right] {} (Td)
								(homa) edge node[above] {0} (Ta)
								(homb) edge node[below] {0} (Tb)
								(Ta) edge node[above] {$\ism$} (Tc)
								(Tb) edge node[below] {$\ism$} (Td)
								(Te) edge node[below] {} (Tf)
								(Tc) edge node[above] {0} (Te)
								(Td) edge node[below] {0} (Tf)
								(Te) edge node[above] {$\ism$} (Tg)
								(Tf) edge node[below] {$\ism$} (Th)
								(Tg) edge node[below] {} (Th);
							\end{tikzpicture}
						\]
						Since $\hm[k]{F} = 0$, the outgoing maps are $0$. Since $F$ is $(k-1)$-fitting, the maps $\hm[k-1]{F \hookrightarrow \scx}$ and $\hm[k-1]{F \hookrightarrow \cx}$ are isomorphisms, so the preceding boundary maps are $0$. Thus the maps $\hm[k]{(\scx, \emptyset) \hookrightarrow (\scx, F)}$ and $\hm[k]{(\cx, \emptyset) \hookrightarrow (\cx, F)}$ are isomorphisms.
				\end{enumerate}
			\end{proof}
			
			\begin{proposition}\label{PROPOSITION: spanning trees}
				Let $k, n \in \NN$ and let $\Delta_n$ be a standard $n$-simplex. The following statements hold.
				\begin{enumerate}
					\item\label{PROPOSITION: spanning trees: existence}
						There exists a spanning $k$-tree in $\Delta_n$.
					\item\label{PROPOSITION: spanning trees: counting faces}
						The number of $k$-faces in any spanning $k$-tree in $\Delta_n$ is $\binom{n}{k}$ if $k \geq 1$, and $0$ if $k = 0$.
					\item\label{PROPOSITION: spanning trees: maximality}
						Let $F$ be a spanning $k$-forest in $\Delta_n$. Then $F$ is a $k$-tree if and only if it is a maximal $k$-forest in the sense that for every $k$-face $E \in \Delta_n \setminus F$ we have ${\hm[k]{F \cup \set{E}} \neq 0}$.
				\end{enumerate}
			\end{proposition}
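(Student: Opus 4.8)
The plan is to handle the three items in turn, reducing each to the lemmas already proved. It is convenient to note first that a spanning $k$-tree $\scx$ always satisfies $\beta_k(\scx)=0$ and, since $\hm[k-1]{\scx \to \bullet}$ is an isomorphism, $\beta_{k-1}(\scx)=\dim\hm[k-1]{\bullet}$, which is $1$ if $k=1$ and $0$ otherwise; likewise $\Delta_n$ is contractible, so $\hm[k-1]{\Delta_n \to \bullet}$ is an isomorphism for every $k$. For item~\ref{PROPOSITION: spanning trees: existence} I would exhibit an explicit spanning $k$-tree rather than argue abstractly. If $k=0$, the empty complex is $0$-spanning and is the unique $0$-tree. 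If $k \geq 1$, fix a vertex $v$ of $\Delta_n$, let $L \dfeq \skl[k-1]{\Delta_{n-1}}$ be the $(k-1)$-skeleton of the opposite face (the simplex on the remaining $n$ vertices), and put $\scx \dfeq v * L$, the simplicial cone on $L$ with apex $v$. A short check of which faces of $\Delta_n$ lie in $v*L$ shows that $\scx$ is exactly $\skl[k-1]{\Delta_n}$ together with all $k$-faces of $\Delta_n$ containing $v$; in particular $\scx$ is $k$-spanning and has exactly $\binom{n}{k}$ faces of dimension $k$. Since a cone is contractible, $\hm[k]{\scx}=0$ (so $\scx$ is a $k$-forest) and $\hm[k-1]{\scx \to \bullet}$ is an isomorphism, hence $\scx$ is a spanning $k$-tree.

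For item~\ref{PROPOSITION: spanning trees: counting faces} the case $k=0$ is immediate, so assume $k\geq 1$. The idea is to first show the face count is the same for all spanning $k$-trees and then read it off the cone above, rather than computing Betti numbers of skeleta directly. Applying Lemma~\ref{LEMMA: spanning subcomplex}\ref{LEMMA: spanning subcomplex: formula} with $\cx=\Delta_n$ to an arbitrary spanning $k$-tree $\scx$: the left-hand side depends only on $\Delta_n$ and $k$, while on the right-hand side $\beta_k(\scx)=0$ and $\beta_{k-1}(\scx)=\dim\hm[k-1]{\bullet}$ are fixed, so $(\#\,k\text{-faces in }\scx)$ does not depend on the choice of $\scx$. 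The cone $v*L$ from item~\ref{PROPOSITION: spanning trees: existence} is one such spanning $k$-tree with $\binom{n}{k}$ top faces, which is therefore the common value. (One could instead compute $\beta_{k-1}(\skl[k-1]{\Delta_n})$ via Proposition~\ref{PROPOSITION: Euler characteristic} and the alternating binomial-sum identity, but the route above is shorter and avoids the awkward small cases.)

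For item~\ref{PROPOSITION: spanning trees: maximality} the case $k=0$ is trivial, so assume $k\geq 1$. For the forward implication, let $F$ be a spanning $k$-tree and suppose, for contradiction, that some $k$-face $E \in \Delta_n \setminus F$ has $\hm[k]{F \cup \set{E}}=0$. Corollary~\ref{COROLLARY: adding a face} applied to $F$ and $E$ forces a change in $\beta_k$ or $\beta_{k-1}$; since $\beta_k$ stays $0$, we get $\beta_{k-1}(F\cup\set{E})=\beta_{k-1}(F)-1=\dim\hm[k-1]{\bullet}-1$, which is negative for $k\geq 2$ and for $k=1$ forces $\beta_0(F\cup\set{E})=0$, impossible as $F\cup\set{E}$ is non-empty. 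Hence $F$ is a maximal $k$-forest.

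For the converse, let $F$ be a maximal spanning $k$-forest. Since $F$ is $k$-spanning, Lemma~\ref{LEMMA: spanning subcomplex}\ref{LEMMA: spanning subcomplex: lower-dimensional fittingness} makes $\hm[k-1]{F \hookrightarrow \Delta_n}$ surjective, so $\beta_{k-1}(F)\geq\beta_{k-1}(\Delta_n)$. If this inequality were strict, Lemma~\ref{LEMMA: spanning subcomplex}\ref{LEMMA: spanning subcomplex: towards fittingness} would produce a $k$-face $E\in\Delta_n\setminus F$ with $\beta_k(F\cup\set{E})=\beta_k(F)=0$, that is $\hm[k]{F\cup\set{E}}=0$, contradicting maximality. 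Thus $\beta_{k-1}(F)=\beta_{k-1}(\Delta_n)$, so the surjection $\hm[k-1]{F\hookrightarrow\Delta_n}$ is an isomorphism; composing with the isomorphism $\hm[k-1]{\Delta_n\to\bullet}$ shows $\hm[k-1]{F\to\bullet}$ is an isomorphism, and together with $\hm[k]{F}=0$ this makes $F$ a spanning $k$-tree. I expect the main obstacle to be item~\ref{PROPOSITION: spanning trees: counting faces}: the tempting head-on computation of $\beta_{k-1}(\skl[k-1]{\Delta_n})$ drags in binomial identities and small-$n$ cases, whereas the clean move is to use the Euler-characteristic identity of Lemma~\ref{LEMMA: spanning subcomplex}\ref{LEMMA: spanning subcomplex: formula} to transfer everything to the single explicit cone; the only other real computation is verifying that $v*L$ is the claimed cone and counting its top faces, with the remaining work being bookkeeping over $k=0$, $k=1$ versus $k\geq 2$, and $n<k$.
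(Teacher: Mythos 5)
Your proof is correct, and for the existence statement and for item~\ref{PROPOSITION: spanning trees: maximality} it is essentially the paper's argument: the same cone $v*L$ over a vertex (the paper phrases it as the $(k-1)$-skeleton of $\Delta_n$ plus all $k$-faces containing $v$, contracted by a deformation retraction), the same use of Corollary~\ref{COROLLARY: adding a face} for the forward implication, and the same use of Lemma~\ref{LEMMA: spanning subcomplex}(\ref{LEMMA: spanning subcomplex: towards fittingness}) for the converse. The genuine divergence is in item~\ref{PROPOSITION: spanning trees: counting faces} and in how you treat $k=1$. The paper counts faces by equating two computations of $\ec(T)$ for an \emph{arbitrary} spanning $k$-tree $T$, which requires evaluating $\sum_{i\leq k-1}(-1)^i\binom{n+1}{i+1}$ explicitly; you instead invoke the identity of Lemma~\ref{LEMMA: spanning subcomplex}(\ref{LEMMA: spanning subcomplex: formula}) to see that the number of $k$-faces is the same for all spanning $k$-trees (since $\beta_k=0$ and $\beta_{k-1}=\dim\hm[k-1]{\bullet}$ are forced), and then read off $\binom{n}{k}$ from the explicit cone. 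This buys you freedom from the binomial identity at the cost of leaning on the lemma (whose proof is itself the same Euler-characteristic computation in disguise), so the two routes are equivalent in substance but yours is lighter on arithmetic. In item~\ref{PROPOSITION: spanning trees: maximality} the paper dispatches $k=1$ by appealing to ordinary graph theory, whereas you run the Lemma~\ref{LEMMA: spanning subcomplex} argument uniformly for all $k\geq 1$ and recover the isomorphism $\hm[k-1]{F\to\bullet}$ by composing the surjection-turned-isomorphism $\hm[k-1]{F\hookrightarrow\Delta_n}$ with $\hm[k-1]{\Delta_n\to\bullet}$; this is a small but genuine streamlining, and it is valid since a surjection between vector spaces of equal finite dimension is an isomorphism.
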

			
			\begin{proof}
				\begin{enumerate}
					\item
						This follows if we apply Lemma~\ref{LEMMA: spanning subcomplex}(\ref{LEMMA: spanning subcomplex: existence of a nice subforest}) for $\skl[k]{\Delta_n} \subseteq \Delta_n$, but we can be much more explicit.
						
						If $k = 0$, then $\emptyset$ is a spanning $0$-tree. If $k \geq 1$, choose a vertex $v$ in $\Delta_n$. Define $T$ to consist of the $(k-1)$-skeleton of $\Delta_n$, as well as of those $k$-faces of $\scx$ which contain $v$. Then $T$ is $k$-spanning by definition, and there exists an obvious deformation retraction of $T$ onto $v$. This deformation retraction induces homology isomorphisms in all dimensions, so $T$ is necessarily a tree.
					\item
						The only spanning $0$-tree is $\emptyset$, so the statement holds for $k = 0$. Assume $k \geq 1$. Let $T$ be any spanning $k$-tree in $\Delta_n$ and let $x$ be the number of $k$-faces of $T$. Counting the number of faces, we obtain
						\[
							\ec(T) = \Big(\sum_{i \in \NN_{\leq k-1}} (-1)^i \binom{n+1}{i+1}\Big) + (-1)^k x = -\Big((-1)^k \binom{n}{k} - 1\Big) + (-1)^k x.
						\]
						On the other hand, since $T$ is $k$-spanning, it has the same homology up to dimension $k-2$ as the standard simplex $\Delta_n$, and thus the same homology up to dimension $k-2$ as a point. Since $T$ is a $k$-tree, this holds also for the dimensions $k-1$ and $k$. Hence
						\[
							\ec(T) = \sum_{i \in \NN_{\leq k}} \beta_i(T) = 1.
						\]
						Equating the two versions of the Euler characteristic (as in Proposition~\ref{PROPOSITION: Euler characteristic}), we obtain $x = \binom{n}{k}$.
					\item
						Clearly the statement holds for the only $0$-forest $F = \emptyset$. Assume hereafter that $k \geq 1$.
						\begin{description}
							\item{$(\impl)$}\\
								Suppose $F$ is a $k$-tree. By Corollary~\ref{COROLLARY: adding a face}, adding $E$ to $F$ either decreases $\beta_{k-1}$ by $1$ or increases $\beta_k$ by $1$. The former is impossible: if $k \geq 2$, then $\hm[k-1]{F}$ is already trivial, and if $k = 1$ (therefore $\beta_{k-1}(F) = 1$), adding a face cannot decrease the number of connected components to zero.
								
								Hence $\beta_k(F \cup \set{E}) = 1$, so $F \cup \set{E}$ is not a $k$-forest.
							\item{$(\revimpl)$}\\
								Apply basic graph theory if $k = 1$ ($1$-forests and $1$-trees are just the usual forests and trees). Suppose $k \geq 2$ and assume that the spanning $k$-forest $F$ is not a $k$-tree, so $\beta_{k-1}(F) > 0 = \beta_{k-1}(\Delta_n)$. Use Lemma~\ref{LEMMA: spanning subcomplex}(\ref{LEMMA: spanning subcomplex: towards fittingness}) to find a $k$-face $E \in \Delta_n \setminus F$ with $\beta_k(F \cup \set{E}) = \beta_k(F) = 0$, contradicting the assumption.
						\end{description}
				\end{enumerate}
			\end{proof}
			
		%2.3-------------------------------
		\subsection{Filtrations on a Point Cloud}
			
			In practice, point clouds are often obtained by sampling from a particular shape, which we want to reconstruct. However, from the point of view of a topologist, point clouds themselves do not have an interesting shape --- the dimension of $0$-homology is the number of points in the point cloud and the higher-dimensional homology groups are all trivial. The idea is to assume that the point cloud is a subspace of a larger metric space (let us denote its metric by $D$), typically some Euclidean space $\RR^N$, in which each point can be thickened to a ball of some specified radius $\alpha$. The union of these balls is called the \df{$\alpha$-offset} of $\pc$ and is denoted by $\pc(\alpha)$, see Figure~\ref{FIGURE: offsets}.
			
			\begin{figure}[!ht]
				\centering
				\hspace*{-1.5cm}
				\begin{tabular}{ccc}
					\includegraphics[scale=0.63]{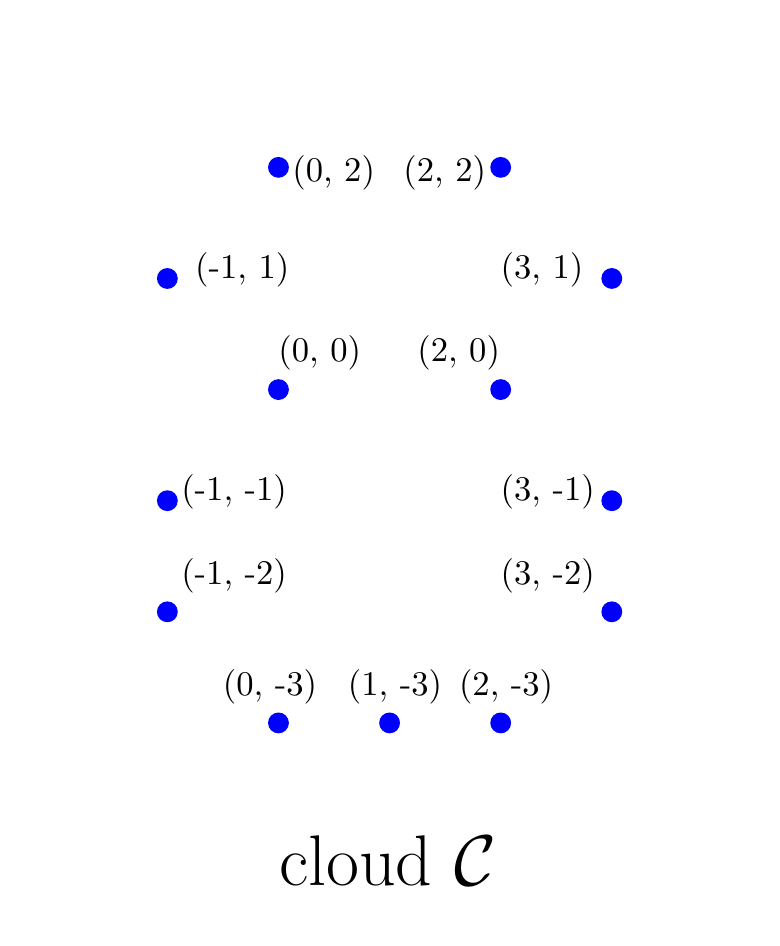} &
					\includegraphics[scale=0.63]{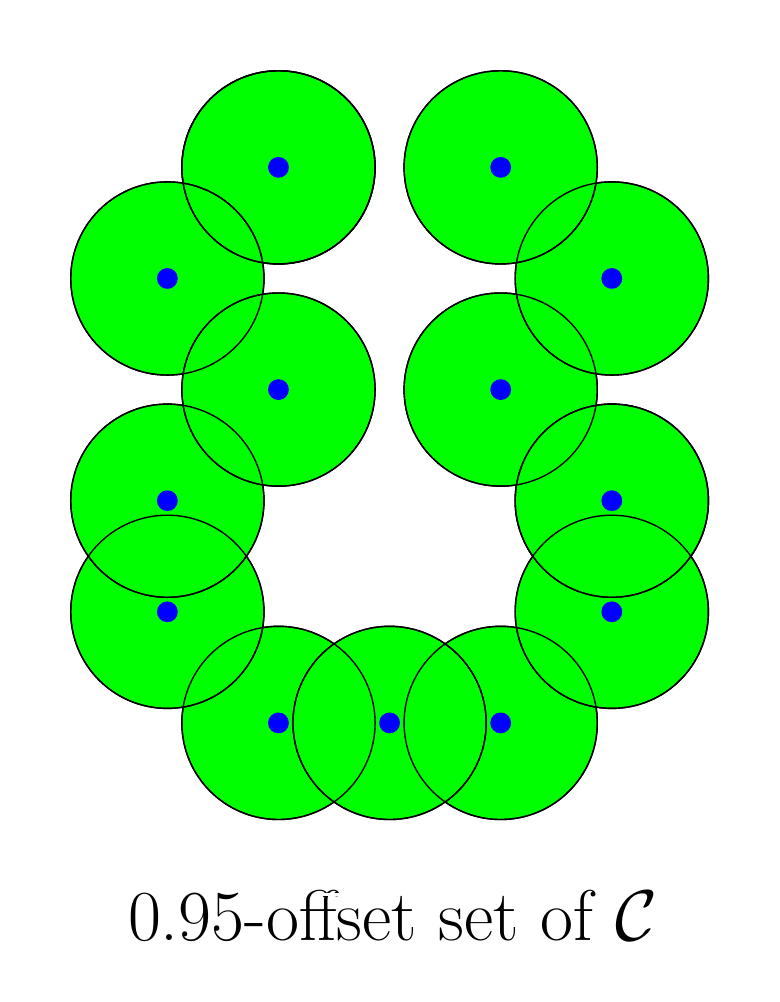} &
					\includegraphics[scale=0.63]{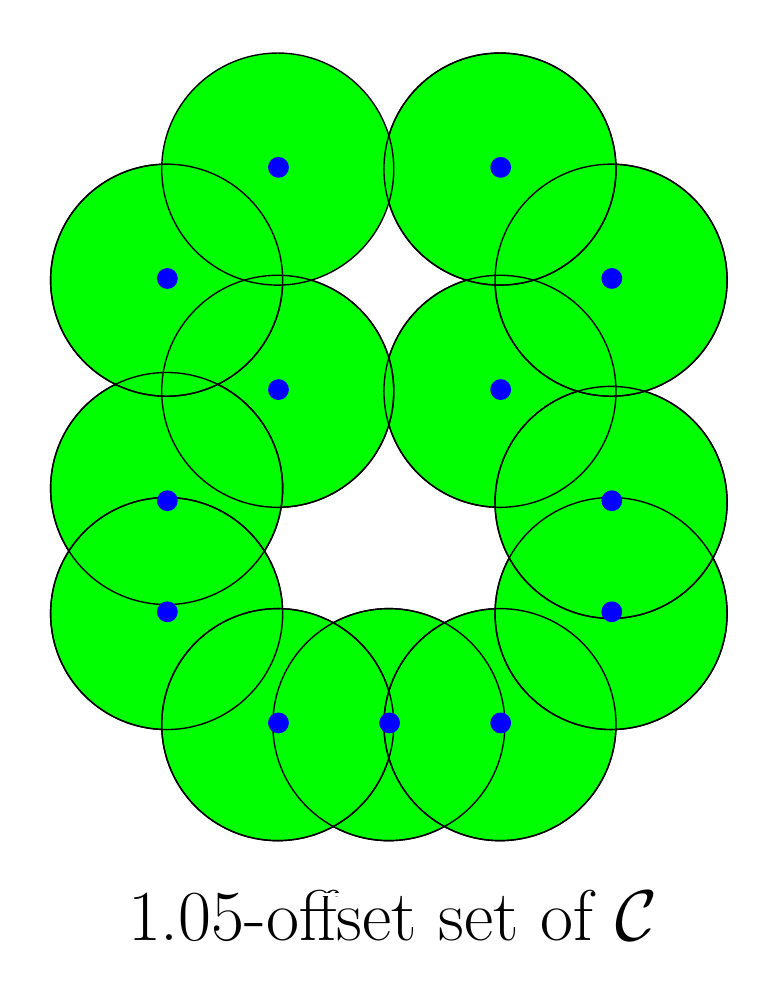}
				\end{tabular}
				\caption{Point cloud $\pc$ and two example offsets of $\pc$. The $1.05$-offset has non-trivial first homology.}\label{FIGURE: offsets}
			\end{figure}
			
			The nerve of $\pc(\alpha)$ is called the \df{\v{C}ech complex} $\cech$ of $\pc$ at $\alpha$. The nerve lemma~\cite{Bjorner:1996:TM:233228.233246} says that the homotopy type of $\cech$ is the same as the homotopy type of $\pc(\alpha)$. Hence, $\cech$ is a potentially good approximation to the shape, from which we sampled the point cloud.
			
			For any $\alpha < \alpha'$, we have the inclusion $\cech[\alpha] \subseteq \cech[\alpha']$. That is, the collection $\big(\cech\big)_{\alpha \in \RR}$ is a \df{filtration}.
			
			\v{C}ech filtration is not ideal for computation, as it requires storing all high-dimensional simplices in a computer memory. On the other hand, the filtration of \df{Vietoris-Rips complexes} is completely determined by the $1$-dimensional skeleton. For any scale $\alpha \in \RR$, the complex $\vr$ has a $k$-dimensional simplex on points $v_0, \ldots, v_k \in \pc$ whenever all pairwise distances $D(v_i, v_j) \leq 2 \alpha$ for all $0 \leq i < j \leq k$.
			
		%2.4-------------------------------
		\subsection{Persistent Homology of a Filtration}
			
			For excellent introductions to persistent homology, see~\cite{ghrist, topodata, pattern, elz-tps-02}. The usual homology is defined for a single complex, but the key idea of persistence is to consider an entire filtration of complexes $\filt$, rather that just a single stage $\stage$ at a specific scale parameter $\alpha$. The reason for this is that it is hard (or even impossible) to choose a single parameter value in a way that assures that $\stage$ is a good approximation to the shape we sampled the point cloud from. Also, choosing a single parameter value is highly unstable. 
			
				\begin{figure}[!ht]
				\centering
				\hspace*{-1cm}
				\begin{tabular}{cc}
					\includegraphics[scale=0.63]{cloudcoord.pdf} &
					\includegraphics[scale=1.2]{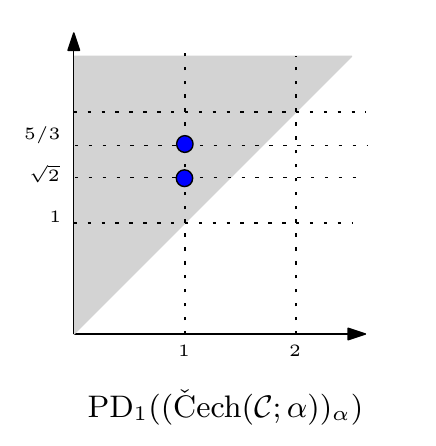}
				\end{tabular}
				\caption{Point cloud $\pc$ and its persistence diagram in dimension~$1$ (with homology coefficients~$\RR$), obtained via a filtration of \v{C}ech complexes on $\pc$. Each point in the diagram represents a cycle present over a range of parameters $\alpha$.}\label{FIGURE: Persistent diagram}
			\end{figure}
			
			Persistent homology in dimension $k$ tracks changes in the $k$-homology $\hm[k]{\stage}$ over a range of scales $\alpha$. This information can be summarized by a  \df{persistence diagram} $\pd_k\Big(\filt\Big)$. A dot $(p, q)$ in a persistence diagram represents an interval $\intco{p}{q}$ corresponding to a topological feature, a $k$-dimensional void, which appears at $p$ and disappears at $q$. These barcodes play the same role as a histogram would in summarizing the shape of data, with long intervals corresponding to strong topological signals and short ones to noise.	
			
			In Figure~\ref{FIGURE: Persistent diagram} the persistence diagram $\pd_1\Big(\big(\cech\big)_{\alpha \in \RR}\Big)$ consists of 2 dots. The dot $(1, \sqrt{2})$ says that a $1$-dimensional cycle enclosing the smaller hole (the upper bounded component of $\RR^2 \setminus \cech$) was born at $\alpha = 1$ and died at $\alpha = \sqrt{2}$ when this hole was filled. Similarly, the dot $(1, \frac{5}{3})$ says that the larger hole persisted from the same birth time $\alpha = 1$ until the later death time $\alpha = \frac{5}{3}$.

		%2.4-------------------------------
		\subsection{Weighted Simplices}
			
			Given a filtration, we can assign to any face in it its \df{weight} as the parameter value when it appears in the filtration. The union of all stages in the filtration, together with the weights of all simplices, is thus a weighted complex (a higher-dimensional analogue of weighted graphs).
			
			For both \v{C}ech and Vietoris-Rips filtrations of a point cloud $\pc$, the simplicial complex for parameter values $\alpha \geq \max_{v_i, v_j \in \pc} \frac{D(v_i, v_j)}{2}$ is a full simplex on $|\pc|$ vertices. Thus we can think of the whole filtration as being encoded by a weighted \emph{simplex}. For this reason most of the results stated in this paper are in terms of weighted simplices. If a certain filtration does not terminate with a full simplex, we can always complete the simplicial complex at the last step to a full simplex by adding the missing faces and assigning them weight bigger than that of all faces in the original filtration.
			
			The main reason to work with a single weighted simplex is to have a simpler notion of a minimal spanning $d$-tree at the last stage of the filtration. Otherwise `minimal spanning $d$-tree' should be replaced with `minimal $(d-1)$-fitting $d$-spanning $d$-forest' and arguments would get more complicated.
			
			We give a formal definition of a weighted simplex. As mentioned in the subsection on notation, we will not need orientation, so we can encode faces with sets, rather than tuples.
			
			\begin{definition}\label{def:weighted}
				Given a set $\pc$, let $\inhpst(\pc)$ denote the set of non-empty subsets of $\pc$. 
				\begin{itemize}
					\item
						A \df{weighting} on a set $\pc$ is a map $w\colon \inhpst(\pc) \to \RR_{\geq 0}$ which is monotone in the sense that if $\emptyset \neq A \subseteq B \subseteq \pc$, then $w(A) \leq w(B)$. For any $A \in \inhpst(\pc)$ the value $w(A)$ is the \df{weight} of $A$ (relative to the weighting $w$).
					\item
						A \df{weighted simplex} is a pair $(\pc, w)$, where $\pc$ is a non-empty finite set and $w$ a weighting on it. We denote $\wsx \dfeq (\pc, w)$ for short.
					\item
						For a weighted simplex $\wsx$ and any family of subsets $\mathscr{S} \subseteq \inhpst(\pc)$ we denote its \df{total weight} by $\tw(\mathscr{S}) \dfeq \sum_{A \in \mathscr{S}} w(A)$.
				\end{itemize}
			\end{definition}
			
			Monotonicity of weighting implies that
			\[
				\rwsx \dfeq \set[1]{A \in \inhpst(\pc)}{w(A) \leq \alpha}
			\]
			is a subcomplex for any $\alpha \in \RR$, and $(\rwsx)_{\alpha \in \RR}$ is a filtration. Note that the image of a weighting is a finite subset of $\intcc{0}{w(\pc)}$, and we have $\rwsx = \inhpst(\pc)$ for all $\alpha \in \RR_{\geq w(\pc)}$.
			
			Conversely, a filtration $\filt$ induces a weighted \emph{complex} with the weighting
			\[
				w(A) = \sup\set{\alpha \in \RR}{A \notin \stage} = \inf\set{\alpha \in \RR}{A \in \stage},
			\]
			and we get a weighted \emph{simplex} whenever each non-empty subset of $\pc$ appears in the filtration at a specific time $\alpha \in \RR_{\geq 0}$.
			
			In the specific case of \v{C}ech filtration, the weighting is given by
			\[
				w(A) \dfeq \inf\set[1]{\alpha \in \RR_{\geq 0}}{\xsome{x}{X}\xall{a}{A}{D(a, x) \leq \alpha}},
			\]
			and in the case of Vietoris-Rips filtration by
			\[
				w(A) \dfeq \frac{1}{2} \cdot \sup_{a, b \in A} D(a, b)
			\]
			for $A \in \inhpst(\pc)$.

	%3=====================
	\section{Minimal Spanning $d$-Tree}\label{SECTION: Minimal Spanning Tree}
		
		The first step in constructing a 1-dimensional homologically persistent skeleton in~\cite{K15} was to take a classical ($1$-dimensional) Minimal Spanning Tree of a given point cloud. With this idea in mind, we generalize the concept of a minimal spanning tree to higher dimensions. Hereafter fix a weighted simplex $\wsx$ and a dimension $d \in \NN$.
		
		\begin{definition}[Minimal Spanning Tree]\label{DEFINITION: Minimal spanning tree}
			A \df{minimal spanning $d$-tree} (or simply \df{minimal spanning tree} when $d$ is understood) of $\wsx$ is a spanning $d$-tree of $\wsx$ with minimal total weight. We use $\mst(\wsx)$ to denote any chosen minimal spanning tree, and shorten this to $\mst$ when $\wsx$ is understood from the context. For any $\alpha \in \RR$  we define
			\[
				\rmst(\wsx) \dfeq \set{A \in \mst(\wsx)}{w(A) \leq \alpha}
			\]
			and shorten this to $\rmst$ when there is no ambiguity.
		\end{definition}
		
		By Proposition~\ref{PROPOSITION: spanning trees}(\ref{PROPOSITION: spanning trees: existence}) a spanning $d$-tree of $\wsx$ exists, and so a minimal spanning $d$-tree exists also. In general there may be many minimal spanning trees; for example, any two edges form a minimal spanning $1$-tree in an equilateral triangle.
		
		In the next subsection we give an explicit construction for a minimal spanning tree and then prove that all minimal spanning trees are obtained this way. This allows us to later prove optimality of minimal spanning trees at all scales (Theorem~\ref{THEOREM: Optimality of Minimal Spanning Trees}).
		
		\subsection{Construction of Minimal Spanning $d$-Trees}\label{SUBSECTION: Construction of a Minimal Spanning Tree}
		
			The idea to obtain a minimal spanning tree is to go through the image of $w$ and inductively construct a $(d-1)$-fitting $d$-spanning $d$-forest $\armst$ in $\rwsx$, with minimal total weight among such, for every $\alpha \in \RR$.
			
			Let $w_1 < w_2 < \ldots < w_n$ be all elements of $\im(w)$ and set additionally $w_0 = -\infty$, $w_{n+1} = \infty$. Declare $\armst \dfeq \emptyset$ for all $\intco[\eR]{-\infty}{w_1}$.
			
			Take $k \in \intcc[\NN]{1}{n}$ and assume that $\armst[\gamma]$ has been defined for all $\gamma < w_k$. We define $\armst$ for $\alpha \in \intco{w_k}{w_{k+1}}$ to consist of the subcomplex we had at the previous stage, but with as many faces of weight $w_k$ added as possible while still keeping the subcomplex a forest.
			
			Explicitly, let $F_1, F_2, \ldots, F_m$ be all $d$-faces of weight $w_k$.\footnote{The order of these faces can be chosen arbitrarily. It is because of this freedom that there are in general many minimal spanning trees.} Define $\scx_0$ to be the union of $\armst[w_{k-1}]$ with the set of all faces in $\wsx$ which have the weight $w_k$ and dimension at most $d-1$. Note that $\scx_0$ is a spanning $d$-forest in $\rwsx$.
			
			Suppose inductively that we have defined a spanning $d$-forest $\scx_{i-1}$, where $i \in \intcc[\NN]{1}{m}$. If $\scx_{i-1} \cup \set{F_i}$ is still a $d$-forest, define $\scx_i \dfeq \scx_{i-1} \cup \set{F_i}$, otherwise define $\scx_i \dfeq \scx_{i-1}$. In the end, set $\armst \dfeq \scx_m$ which is a spanning $d$-forest by construction.
			
			Here is the summary of this procedure, written as an explicit algorithm.
			
			\begin{algorithm}[H]
				\caption{Construction of a minimal spanning $d$-tree}\label{ALGORITHM: Minimal spanning tree}
				\begin{algorithmic}[1]
					\STATE $w_0 \dfeq -\infty$
					\STATE $w_1, w_2, \ldots, w_n \dfeq$ elements of $\im(w)$, in order
					\STATE $w_{n+1} \dfeq \infty$
					\STATE $\armst \dfeq \emptyset$ for all $\alpha \in \intco[\eR]{-\infty}{w_1}$
					\FOR{$k = 1$ \TO $n$}
						\STATE $F_1, F_2, \ldots, F_m \dfeq$ $d$-faces of weight $w_k$ in $\wsx$
						\STATE $\scx_0 \dfeq \armst[w_{k-1}] \cup \set[1]{A \in \skl[d-1]{\wsx}}{w(A) = w_k}$
						\FOR{$i = 1$ \TO $m$}
							\IF{$\beta_d(\scx_{i-1} \cup \set{F_i}) = 0$}
								\STATE $\scx_i \dfeq \scx_{i-1} \cup \set{F_i}$
							\ELSE
								\STATE $\scx_i \dfeq \scx_{i-1}$
							\ENDIF
						\ENDFOR
						\STATE $\armst \dfeq \scx_m$ for all $\alpha \in \intco[\eR]{w_k}{w_{k+1}}$
					\ENDFOR
				\end{algorithmic}
			\end{algorithm}
			
			\begin{example}\label{MSTexample}
				Let $\pc$ be a point cloud consisting of four vertices with pairwise distances as specified on the left-hand side of Figure~\ref{MST}. The right-hand side of Figure~\ref{MST} depicts a minimal spanning $2$-tree at different scales $\alpha$. The weighting is induced by the \v{C}ech filtration on $\pc$.
				
				\begin{figure}[!ht]
					\centering
					\includegraphics{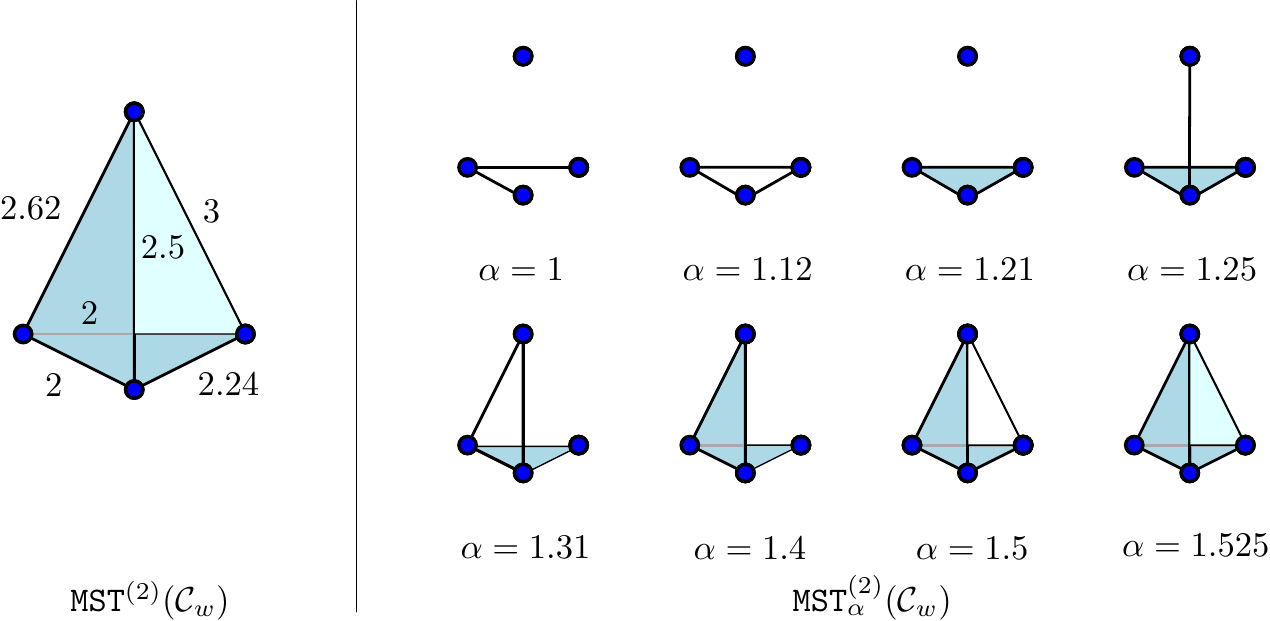}
					\caption{Geometric realizations of $\mathtt{MST}^{(2)}(\wsx)$ and its reduced forms with respect to \v{C}ech filtration of a point cloud $\pc$ with four vertices.}\label{MST}
				\end{figure}
			\end{example}
			
		\subsection{Optimality of Minimal Spanning $d$-Trees}\label{SUBSECTION: Construction and Optimality of a Minimal Spanning Tree}
		
			In this subsection we prove that the (final stage of the) $d$-forest constructed earlier is indeed a minimal spanning $d$-tree, that any minimal spanning tree can be obtained this way, and finally, that reduced versions of minimal spanning trees are optimal in the sense that they have minimal total weight among all $(d-1)$-fitting $d$-spanning $d$-forests in $\rwsx$ (that is, they are optimal at every scale, not just at the final one, as per definition). 
			
			\begin{lemma}\label{LEMMA: optimality of minimal spanning trees}
				For every $\alpha \in \RR$ the subcomplex $\armst$ is a $(d-1)$-fitting $d$-spanning $d$-forest in $\rwsx$, and moreover has minimal total weight among all $(d-1)$-fitting $d$-spanning $d$-forests in $\rwsx$.
			\end{lemma}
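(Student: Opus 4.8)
The plan is to prove the two assertions in turn: first that, for a fixed $\alpha \in \RR$, the subcomplex $\armst$ really is a $(d-1)$-fitting $d$-spanning $d$-forest in $\rwsx$, and then that it is of minimal total weight among all such. The $d$-spanning and $d$-forest properties are immediate from Algorithm~\ref{ALGORITHM: Minimal spanning tree}: the subcomplex $\scx_0$ formed at each weight level contains every face of dimension at most $d-1$ of that weight, so $\armst$ contains all of $\skl[d-1]{\rwsx}$, while a $d$-face is appended only when $\beta_d$ stays $0$. For $(d-1)$-fittingness, Lemma~\ref{LEMMA: spanning subcomplex}(\ref{LEMMA: spanning subcomplex: lower-dimensional fittingness}) already gives that $\hm[i]{\armst \hookrightarrow \rwsx}$ is an isomorphism for $i \le d-2$ and a surjection for $i = d-1$, so it remains to verify $\beta_{d-1}(\armst) = \beta_{d-1}(\rwsx)$. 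The crucial point is that $\armst$ is \emph{maximal} as a $d$-forest in $\rwsx$: if a $d$-face $F \in \rwsx \setminus \armst$ satisfied $\beta_d(\armst \cup \set{F}) = 0$, then at the step where Algorithm~\ref{ALGORITHM: Minimal spanning tree} examined $F$ the current subcomplex $\scx_{i-1}$ was already contained in $\armst$ (the construction only grows), hence $\beta_d(\scx_{i-1} \cup \set{F}) = 0$ too, since a $d$-cycle of a subcomplex is a $d$-cycle of the ambient complex of dimension at most $d$; then the algorithm would have taken $F$, a contradiction. Therefore $\beta_{d-1}(\armst) > \beta_{d-1}(\rwsx)$ is impossible, because Lemma~\ref{LEMMA: spanning subcomplex}(\ref{LEMMA: spanning subcomplex: towards fittingness}) would in that case exhibit exactly such an $F$; and a surjection of finite-dimensional vector spaces of equal dimension is an isomorphism.

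For optimality I would first note that every $(d-1)$-fitting $d$-spanning $d$-forest $\scx$ in $\rwsx$ has the same $(d-1)$-skeleton, namely $\skl[d-1]{\rwsx}$, and --- feeding $\beta_{d-1}(\scx) = \beta_{d-1}(\rwsx)$ and $\beta_d(\scx) = 0$ into Lemma~\ref{LEMMA: spanning subcomplex}(\ref{LEMMA: spanning subcomplex: formula}) --- the same number $N$ of $d$-faces, a quantity depending only on $\rwsx$. So comparing total weights of such forests is the same as comparing the total weights of their $d$-faces. Enumerate the $d$-faces of $\armst$ as $F_1, \dots, F_N$ in the order the algorithm appended them; because weight levels are processed in increasing order, this is a non-decreasing listing of weights. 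Enumerate the $d$-faces of an arbitrary competitor $\scx$ as $G_1, \dots, G_N$ with $w(G_1) \le \dots \le w(G_N)$. Then $\tw(\armst) \le \tw(\scx)$ follows once we show $w(F_j) \le w(G_j)$ for every $j$, which I would obtain by the classical greedy exchange argument.

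The matroidal input is that a family $S$ of $d$-faces of $\rwsx$ forms a $d$-forest together with $\skl[d-1]{\rwsx}$ precisely when the vectors $\set{\partial_d F}{F \in S}$ are linearly independent in $\ch[d-1]{\rwsx}$; hence whenever $S$ and $S'$ are two such independent families with $|S| < |S'|$, some $F \in S' \setminus S$ keeps $S \cup \set{F}$ independent. Suppose $w(F_j) > w(G_j)$ for some $j$ and take $j$ least with this property. Applying the exchange property to $\set{F_1, \dots, F_{j-1}}$ and $\set{G_1, \dots, G_j}$ produces some $G_i \notin \set{F_1, \dots, F_{j-1}}$ with $w(G_i) \le w(G_j) < w(F_j)$ and $\set{F_1, \dots, F_{j-1}, G_i}$ independent. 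Every $d$-face the algorithm had already selected by the time it considered $G_i$ has weight at most $w(G_i)$, hence strictly below $w(F_j)$, and --- being a $d$-face of $\armst$ listed in non-decreasing weight order --- therefore lies in $\set{F_1, \dots, F_{j-1}}$. Consequently, at that step adding $G_i$ keeps the subcomplex a $d$-forest, so the algorithm selects $G_i$; thus $G_i = F_l$ for some $l$, and $G_i \notin \set{F_1, \dots, F_{j-1}}$ forces $l \ge j$, contradicting $w(G_i) < w(F_j) \le w(F_l)$.

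I expect the main obstacle to be the bookkeeping in the last paragraph: one must argue with care that when $G_i$ is examined the already-selected $d$-faces are confined to $\set{F_1, \dots, F_{j-1}}$ (using both the increasing order of weight levels and the strict inequality $w(G_i) < w(F_j)$), and that the $d$-forest property is inherited by the algorithm's smaller intermediate subcomplex. Both points rely on the monotonicity of the construction and on $\beta_d$ being monotone under inclusions of complexes of dimension at most $d$ --- the latter amounting to the fact that cycles of a subcomplex are cycles of the ambient complex. The face-counting step and the reduction to comparing $d$-face weights are routine given Lemma~\ref{LEMMA: spanning subcomplex}.
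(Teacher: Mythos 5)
Your proof is correct, but it takes a genuinely different route from the paper's in both halves. For $(d-1)$-fittingness the paper argues by induction over the weight levels $w_k$, applying the counting formula of Lemma~\ref{LEMMA: spanning subcomplex}(\ref{LEMMA: spanning subcomplex: formula}) at two consecutive levels and matching the dimensions of $\hm[d-1]{\armst[w_k]}$ and $\hm[d-1]{\dsk[w_k]}$ by a face count; you instead observe that the greedy construction produces a \emph{maximal} $d$-forest (a face rejected at its level stays unaddable, since $d$-cycles of the intermediate $d$-complex persist in the larger one), so Lemma~\ref{LEMMA: spanning subcomplex}(\ref{LEMMA: spanning subcomplex: towards fittingness}) rules out $\beta_{d-1}(\armst) > \beta_{d-1}(\rwsx)$ and the surjectivity from part~(\ref{LEMMA: spanning subcomplex: lower-dimensional fittingness}) finishes --- shorter and induction-free. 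For minimality the paper again inducts on levels: it truncates a competitor $\scx$ to weights below $w_k$, completes the truncation to a $(d-1)$-fitting spanning forest in $\rwsx[w_{k-1}]$ via part~(\ref{LEMMA: spanning subcomplex: towards fittingness}), invokes the induction hypothesis, and balances the added faces with another application of the counting formula. You instead make explicit the matroid structure that the paper only alludes to in the remark after Lemma~\ref{LEMMA: Algorithm for Minimal Spanning Trees}: spanning $d$-forests correspond to independent sets of boundary columns of $\partial_d$ over $\field$, every $(d-1)$-fitting $d$-spanning $d$-forest in $\rwsx$ has the full $(d-1)$-skeleton and the same number of $d$-faces, and the Kruskal-type exchange argument yields entrywise domination of the sorted $d$-face weights. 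The delicate point in your version is exactly the bookkeeping you flag --- that when the algorithm examines the exchanged face $G_i$ all previously selected $d$-faces have weight at most $w(G_i) < w(F_j)$ and hence lie in $\{F_1, \dots, F_{j-1}\}$, so independence of a subset forces acceptance of $G_i$ --- and you handle it correctly. What each approach buys: the paper's induction stays entirely within the Euler-characteristic bookkeeping it already needs elsewhere and proves optimality level by level in a self-contained way; yours is more conceptual, explains why the greedy choice is optimal, and avoids the paper's $a+b-m$ face count, at the cost of importing the (easy) matroid facts, namely the characterization $\hm[d]{\cdot}=0$ via linear independence of boundary columns and the exchange property.
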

			
			\begin{proof}
				$\armst$ is a $d$-spanning $d$-forest by construction. As for the rest, it suffices to prove this for $\alpha \in \im(w) = \set{w_k}{k \in \intcc[\NN]{1}{n}}$. We prove it by induction on $k$. Certainly, this holds for $k = 0$ (as before, we use the notation $w_0 = -\infty$, $w_{n+1} = \infty$).
				
				Take $k \in \intcc[\NN]{1}{n}$ and assume $\armst[w_{k-1}]$ is a minimal $(d-1)$-fitting $d$-spanning $d$-forest. For fittingness it suffices to check that $\armst[w_k]$ is $(d-1)$-fitting in $\dsk[w_k]$. By Lemma~\ref{LEMMA: spanning subcomplex}(\ref{LEMMA: spanning subcomplex: lower-dimensional fittingness}), $\armst[w_k]$ is at least $(d-2)$-fitting and the map $\hm[d-1]{\armst[w_k] \hookrightarrow \dsk[w_k]}$ is surjective. To prove it is bijective, it suffices to verify that the dimensions of the domain and the codomain match.
				
				Using Lemma~\ref{LEMMA: spanning subcomplex}(\ref{LEMMA: spanning subcomplex: formula}) for $w_k$ and $w_{k-1}$ yields
				\begin{multline*}
					\big(\# \text{$d$-faces in $\rwsx[w_k]$}\big) + \beta_{d-1}(\dsk[w_k]) - \beta_d(\dsk[w_k]) \sepeq \\
					= \big(\# \text{$d$-faces in $\armst[w_k]$}\big) + \beta_{d-1}(\armst[w_k]) - \beta_d(\armst[w_k])
				\end{multline*}
				and
				\begin{multline*}
					\big(\# \text{$d$-faces in $\rwsx[w_{k-1}]$}\big) + \beta_{d-1}(\dsk[w_{k-1}]) - \beta_d(\dsk[w_{k-1}]) \sepeq \\
					= \big(\# \text{$d$-faces in $\armst[w_{k-1}]$}\big) + \beta_{d-1}(\armst[w_{k-1}]) - \beta_d(\armst[w_{k-1}]).
				\end{multline*}
				
				We know that $\armst[w_k]$ and $\armst[w_{k-1}]$ are $d$-forests, and the induction hypothesis tells us $\armst[w_{k-1}]$ is $(d-1)$-fitting, so the above equalities reduce to
				\[
					\big(\# \text{$d$-faces in $\rwsx[w_k]$}\big) + \beta_{d-1}(\dsk[w_k]) - \beta_d(\dsk[w_k]) \sepeq \big(\# \text{$d$-faces in $\armst[w_k]$}\big) + \beta_{d-1}(\armst[w_k]),
				\]
				\[
					\big(\# \text{$d$-faces in $\rwsx[w_{k-1}]$}\big) - \beta_d(\dsk[w_{k-1}]) \sepeq \big(\# \text{$d$-faces in $\armst[w_{k-1}]$}\big).
				\]
				
				Subtract these two equalities and rearrange the result to get
				\[
					\beta_{d-1}(\dsk[w_k]) - \beta_{d-1}(\armst[w_k]) =
				\]
				\begin{multline*}
					= \underbrace{\Big(\beta_d(\dsk[w_k]) - \beta_d(\dsk[w_{k-1}])\Big)}_{\# \text{$d$-faces with weight $w_k$ which increase $\beta_d$}} + \\
					+ \underbrace{\Big(\# \text{$d$-faces in $\armst[w_k]$} - \# \text{$d$-faces in $\armst[w_{k-1}]$}\Big)}_{\# \text{$d$-faces with weight $w_k$ which do not increase $\beta_d$}} - \\
					- \underbrace{\Big(\# \text{$d$-faces in $\rwsx[w_k]$} - \# \text{$d$-faces in $\rwsx[w_{k-1}]$}\Big)}_{\# \text{$d$-faces with weight $w_k$}}
				\end{multline*}
				which is zero, proving the desired equality of dimensions.
				
				We now prove minimality inductively on $k$. Clearly, the statement holds for $k = 0$.
				
				Let $\scx$ be a $(d-1)$-fitting $d$-spanning $d$-forest in $\rwsx[w_k]$. Define 
				\[
					\scx' \dfeq \set{F \in \scx}{w(F) < w_k}.
				\]
				Then $\scx'$ is a $d$-spanning $d$-forest in $\rwsx[w_{k-1}]$; in particular, $\hm[d-1]{\scx' \hookrightarrow \rwsx[w_{k-1}]}$ is surjective. Denote $m \dfeq \beta_{d-1}(\scx') - \beta_{d-1}(\rwsx[w_{k-1}])$. Using Lemma~\ref{LEMMA: spanning subcomplex}(\ref{LEMMA: spanning subcomplex: towards fittingness}) $m$ times, we get $d$-faces $F_1, \ldots, F_m \in \rwsx[w_{k-1}] \setminus \scx'$, such that $\scx'' \dfeq \scx' \cup \set{F_1, \ldots, F_m}$ is a $(d-1)$-fitting $d$-spanning $d$-forest in $\rwsx[w_{k-1}]$.
				
				By the induction hypothesis the total weight of $\armst[w_{k-1}]$ is at most the total weight of $\scx''$. Let $a \in \NN$ be the number of faces in $\wsx$ of dimension at most $d-1$ with weight $w_k$ and let $b \in \NN$ be the number of $d$-faces in $\scx$ of weight $w_k$. Then
				\[
					\tw(\scx) = \tw(\scx') + (a + b) \cdot w_k = \tw(\scx'') - \sum_{i = 1}^m w(F_i) + (a + b) \cdot w_k \geq
				\]
				\[
					\geq \tw(\scx'') + (a + b - m) \cdot w_k \geq \tw(\armst[w_{k-1}]) + (a + b - m) \cdot w_k = \tw(\armst[w_k]),
				\]
				where we still need to justify the final equality. That is, we need to check that we add $a + b - m$ faces when going from $\armst[w_{k-1}]$ to $\armst[w_k]$. Since $\armst$ is $d$-spanning at all times, this reduces to checking that $\armst[w_k]$ has $b - m$ more $d$-dimensional faces than $\armst[w_{k-1}]$.
				
				Refer again to Lemma~\ref{LEMMA: spanning subcomplex}(\ref{LEMMA: spanning subcomplex: formula}) to get
				\begin{multline*}
					\big(\# \text{$d$-faces in $\rwsx[w_k]$}\big) + \beta_{d-1}(\dsk[w_k]) - \beta_d(\dsk[w_k]) = \\
					= \big(\# \text{$d$-faces in $\scx$}\big) + \beta_{d-1}(\scx) - \beta_d(\scx),
				\end{multline*}
				\begin{multline*}
					\big(\# \text{$d$-faces in $\rwsx[w_{k-1}]$}\big) + \beta_{d-1}(\dsk[w_{k-1}]) - \beta_d(\dsk[w_{k-1}]) = \\
					= \big(\# \text{$d$-faces in $\scx'$}\big) + \beta_{d-1}(\scx') - \beta_d(\scx').
				\end{multline*}
				This reduces to
				\[
					\big(\# \text{$d$-faces in $\rwsx[w_k]$}\big) - \beta_d(\dsk[w_k]) = \big(\# \text{$d$-faces in $\scx$}\big),
				\]
				\[
					\big(\# \text{$d$-faces in $\rwsx[w_{k-1}]$}\big) + \beta_{d-1}(\dsk[w_{k-1}]) - \beta_d(\dsk[w_{k-1}]) = \big(\# \text{$d$-faces in $\scx'$}\big) + \beta_{d-1}(\scx').
				\]
				Hence
				\[
					m = \beta_{d-1}(\scx') - \beta_{d-1}(\dsk[w_{k-1}]) =
				\]
				\[
					= b + \Big(\beta_d(\dsk[w_k]) - \beta_d(\dsk[w_{k-1}])\Big) - \Big(\# \text{$d$-faces in $\rwsx[w_k]$} - \# \text{$d$-faces in $\rwsx[w_{k-1}]$}\Big),
				\]
				so
				\[
					b - m = \Big(\# \text{$d$-faces in $\rwsx[w_k]$} - \# \text{$d$-faces in $\rwsx[w_{k-1}]$}\Big) - \Big(\beta_d(\dsk[w_k]) - \beta_d(\dsk[w_{k-1}])\Big) 
				\]
				which by the calculation for $\armst$ in the fittingness part of the proof above equals
				\[
					\big(\# \text{$d$-faces in $\armst[w_k]$}\big) - \big(\# \text{$d$-faces in $\armst[w_{k-1}]$}\big).
				\]
			\end{proof}
			
			We claim that minimal spanning trees (as given by Definition~\ref{DEFINITION: Minimal spanning tree}) are precisely the complexes, obtained in Algorithm~\ref{ALGORITHM: Minimal spanning tree}, at their final stage.
			
			\begin{lemma}[Correctness of Algorithm~\ref{ALGORITHM: Minimal spanning tree}]\label{LEMMA: Algorithm for Minimal Spanning Trees}
				Let $\wsx$ be a weighted simplex and $\armst$ as given by Algorithm~\ref{ALGORITHM: Minimal spanning tree}.
				\begin{enumerate}
					\item\label{LEMMA: Algorithm for Minimal Spanning Trees: correctness of algorithm}
						For $\alpha \in \RR_{\geq w(\pc)}$ the complex $\armst$ is a minimal spanning $d$-tree of $\wsx$. Denoting $\mst \dfeq \armst[w(\pc)]$, we have $\rmst = \armst$ for all $\alpha \in \RR$.
					\item\label{LEMMA: Algorithm for Minimal Spanning Trees: universality}
						Every minimal spanning $d$-tree of $\wsx$ is of the form $\armst[w(\pc)]$, obtained via Algorithm~\ref{ALGORITHM: Minimal spanning tree}.
				\end{enumerate}
			\end{lemma}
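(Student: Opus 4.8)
The plan is to derive both parts from Lemma~\ref{LEMMA: optimality of minimal spanning trees} together with a single weight‑reducing exchange argument.

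For part~(\ref{LEMMA: Algorithm for Minimal Spanning Trees: correctness of algorithm}): when $\alpha \geq w(\pc)$ the complex $\rwsx$ is the full simplex on the vertex set $\pc$, and by Lemma~\ref{LEMMA: optimality of minimal spanning trees} the subcomplex $\armst$ is a $(d-1)$-fitting $d$-spanning $d$-forest of minimal total weight among all such. I would first observe that inside a full simplex the notions ``$(d-1)$-fitting $d$-spanning $d$-forest'' and ``spanning $d$-tree'' coincide: the $d$-forest condition is $\hm[d]{\armst} = 0$, while $(d-1)$-fittingness makes $\hm[d-1]{\armst \hookrightarrow \rwsx}$ an isomorphism, which composed with the isomorphism induced by the contraction $\rwsx \to \bullet$ gives the remaining $d$-tree condition; conversely a spanning $d$-tree is $(d-1)$-fitting in the full simplex by Lemma~\ref{LEMMA: spanning subcomplex}(\ref{LEMMA: spanning subcomplex: lower-dimensional fittingness}) plus a Betti‑number count. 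Hence $\armst$ is a spanning $d$-tree of minimal total weight, that is, a minimal spanning $d$-tree, and $\mst \dfeq \armst[w(\pc)]$ makes sense. For $\rmst = \armst$ I would use that $\armst$ is increasing in $\alpha$ and constant for $\alpha \geq w_n = w(\pc)$, and that every face introduced at level $k$ has weight $w_k$; so the faces of $\mst$ of weight $\leq \alpha$ are exactly the ones already present in $\armst$.

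For part~(\ref{LEMMA: Algorithm for Minimal Spanning Trees: universality}), let $T$ be any minimal spanning $d$-tree and put $T_k \dfeq \set{A \in T}{w(A) \leq w_k}$. The crux is a \emph{saturation} claim: for each $k$ and each $d$-face $F$ of weight $w_k$ with $F \notin T$, the complex $T_k \cup \set{F}$ is not a $d$-forest. To see this, recall $\wsx$ is a full simplex, so by Proposition~\ref{PROPOSITION: spanning trees}(\ref{PROPOSITION: spanning trees: maximality}) $T$ is a maximal $d$-forest; hence $T \cup \set{F}$ contains a nonzero $d$-cycle $z$, which involves $F$ with nonzero coefficient since $T$ is a forest. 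If all the other $d$-faces of $z$ had weight $\leq w_k$, then $z$ would be a nonzero $d$-cycle of $T_k \cup \set{F}$, contradicting the assumption; so $z$ involves a $d$-face $G \in T$ with $w(G) > w_k = w(F)$. Then $T' \dfeq (T \setminus \set{G}) \cup \set{F}$ is $d$-spanning, and it is a $d$-forest because the space of $d$-cycles of $T \cup \set{F}$ is one‑dimensional (Corollary~\ref{COROLLARY: adding a face}), spanned by $z$, and no nonzero multiple of $z$ omits $G$; since $T'$ is a $d$-spanning $d$-forest with the same number of $d$-faces as the spanning $d$-tree $T$, it is itself a spanning $d$-tree (Proposition~\ref{PROPOSITION: spanning trees}, items~\ref{PROPOSITION: spanning trees: counting faces} and~\ref{PROPOSITION: spanning trees: maximality}). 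But $\tw(T') = \tw(T) - w(G) + w(F) < \tw(T)$, contradicting minimality of $T$; this proves the claim.

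Finally I would run an induction on $k$ showing that, for a suitable choice of the (arbitrary) orderings of equal‑weight $d$-faces in Algorithm~\ref{ALGORITHM: Minimal spanning tree}, $\armst[w_k] = T_k$. The base $k = 0$ is trivial. Assuming $\armst[w_{k-1}] = T_{k-1}$: since $T$ contains the whole $(d-1)$-skeleton, $\scx_0$ equals $T_{k-1}$ together with all lower‑dimensional weight‑$w_k$ faces of $T$, so $\scx_0 \subseteq T_k$ and $T_k = \scx_0 \cup S$ with $S$ the set of $d$-faces of $T$ of weight $w_k$. Ordering the weight‑$w_k$ $d$-faces so that those of $S$ come first, the inner loop adds all of $S$ (every partial union lies in the $d$-forest $T_k$, so $\beta_d = 0$), reaching $T_k$, and then adds no further face (by the saturation claim each remaining weight‑$w_k$ $d$-face $F \notin T$ has $\beta_d(T_k \cup \set{F}) > 0$); hence $\armst[w_k] = T_k$. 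Taking $k = n$ gives $\armst[w(\pc)] = T$. The main obstacle is the saturation claim — locating inside the unique new $d$-cycle a strictly heavier $d$-face of $T$ that can be traded for $F$ while keeping the result a spanning $d$-tree; everything else is bookkeeping against the construction and the Euler‑characteristic identities already established.
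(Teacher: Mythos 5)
Your proposal is correct, and part~(\ref{LEMMA: Algorithm for Minimal Spanning Trees: correctness of algorithm}) is handled just as in the paper: invoke Lemma~\ref{LEMMA: optimality of minimal spanning trees} at $\alpha \geq w(\pc)$, where $\rwsx$ is the full simplex so that ``$(d-1)$-fitting $d$-spanning $d$-forest'' coincides with ``spanning $d$-tree'', and read off $\rmst = \armst$ from the construction. For part~(\ref{LEMMA: Algorithm for Minimal Spanning Trees: universality}) you take a genuinely different route. The paper also orders the weight-$w_k$ faces so that those of the given $\mst$ come first, and observes (as you do) that they are all accepted; but for the converse inclusion it does not argue face by face: it notes that $\armst$ and $\rmst$ are both $(d-1)$-fitting $d$-spanning $d$-forests and applies the Euler-characteristic identity of Lemma~\ref{LEMMA: spanning subcomplex}(\ref{LEMMA: spanning subcomplex: formula}) to conclude they have the same number of $d$-faces at every stage, hence are equal. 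You instead prove a saturation claim by a matroid-style exchange: if a non-tree face $F$ of weight $w_k$ could be added to $T_k$ without creating a $d$-cycle, the unique (up to scalar) cycle $z$ in $T \cup \set{F}$ would contain a strictly heavier face $G \in T$, and $(T \setminus \set{G}) \cup \set{F}$ would be a cheaper spanning tree, contradicting minimality. This is more elementary and self-contained --- in particular it does not presuppose that the restrictions $\rmst$ of an \emph{arbitrary} minimal spanning tree are $(d-1)$-fitting, which in the paper's counting argument is exactly the property one might worry is only established downstream --- at the cost of being longer and of reproving, in effect, the greedy-exchange mechanism that the paper has packaged into Lemma~\ref{LEMMA: optimality of minimal spanning trees}. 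One citation should be tightened: that $T' = (T \setminus \set{G}) \cup \set{F}$ is a spanning $d$-\emph{tree} does not follow from Proposition~\ref{PROPOSITION: spanning trees}(\ref{PROPOSITION: spanning trees: counting faces}),(\ref{PROPOSITION: spanning trees: maximality}) alone; the quickest justification is Lemma~\ref{LEMMA: spanning subcomplex}(\ref{LEMMA: spanning subcomplex: formula}) applied to $T$ and $T'$ (equal $d$-face counts and $\beta_d = 0$ force $\beta_{d-1}(T') = \beta_{d-1}(T)$), or alternatively extend $T'$ to a tree via Lemma~\ref{LEMMA: spanning subcomplex}(\ref{LEMMA: spanning subcomplex: towards fittingness}) and count faces with Proposition~\ref{PROPOSITION: spanning trees}(\ref{PROPOSITION: spanning trees: counting faces}). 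With that one-line fix your argument is complete.
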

			
			\begin{proof}
				\begin{enumerate}
					\item
						Use Lemma~\ref{LEMMA: optimality of minimal spanning trees} for $\alpha \geq w(\pc)$ while noting that in this case $\rwsx$ is the whole simplex, so has the homology of a point.
					\item
						Let $\mst$ be any minimal spanning tree. We get $\rmst = \armst$ for all $\alpha \in \RR$ if we choose the order of $d$-faces at any weight $w_k$ to start with the $d$-faces in $\mst$, followed by those not in $\mst$. It is clear from Algorithm~\ref{ALGORITHM: Minimal spanning tree} that $\armst$ includes all $d$-faces of weight $w_k$ in $\rmst$. To get the converse, note that $\armst$ and $\rmst$ (both of which are $(d-1)$-fitting $d$-spanning $d$-forests) have the same number of $d$-faces at every stage by Lemma~\ref{LEMMA: spanning subcomplex}(\ref{LEMMA: spanning subcomplex: formula}).
				\end{enumerate}
			\end{proof}
			
			\begin{remark}
				We conclude that Algorithm~\ref{ALGORITHM: Minimal spanning tree} yields a minimal spanning tree. The general idea of the algorithm was to take the necessary amount of $d$-faces in the tree (the exact number is given by Proposition~\ref{PROPOSITION: spanning trees}(\ref{PROPOSITION: spanning trees: counting faces})) while choosing first among lighter faces, so the greedy algorithm works (as one would anticipate from matroid theory).
			\end{remark}
			
			\begin{theorem}[Optimality of Minimal Spanning Trees]\label{THEOREM: Optimality of Minimal Spanning Trees}
				For any minimal spanning tree $\mst$ of a weighted simplex $\wsx$ and every $\alpha \in \RR$ the subcomplex $\rmst$ is a $(d-1)$-fitting $d$-spanning $d$-forest in $\rwsx$, and has a minimal total weight among all $(d-1)$-fitting $d$-spanning $d$-forests in $\rwsx$.
			\end{theorem}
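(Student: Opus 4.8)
The plan is to deduce this theorem directly from the two preceding lemmas, since all the substantive work has already been carried out there; the theorem is essentially the combination of Lemma~\ref{LEMMA: optimality of minimal spanning trees} with the classification of minimal spanning trees in Lemma~\ref{LEMMA: Algorithm for Minimal Spanning Trees}. First I would fix an arbitrary minimal spanning tree $\mst$ of $\wsx$ and invoke Lemma~\ref{LEMMA: Algorithm for Minimal Spanning Trees}(\ref{LEMMA: Algorithm for Minimal Spanning Trees: universality}): there is a permitted ordering of the $d$-faces at each weight level (namely, at every weight $w_k$ list the $d$-faces belonging to $\mst$ before those not in $\mst$) for which the output $\armst[w(\pc)]$ of Algorithm~\ref{ALGORITHM: Minimal spanning tree} equals $\mst$.

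Next I would apply part~(\ref{LEMMA: Algorithm for Minimal Spanning Trees: correctness of algorithm}) of the same lemma, which, with this identification, gives $\rmst = \armst$ for every $\alpha \in \RR$. Finally, Lemma~\ref{LEMMA: optimality of minimal spanning trees} states exactly that each $\armst$ is a $(d-1)$-fitting $d$-spanning $d$-forest in $\rwsx$ of minimal total weight among all such forests; transporting this along the equality $\rmst = \armst$ yields the claimed statement for $\rmst$.

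There is no genuine obstacle: the only point requiring a moment's attention is the quantifier structure — the theorem asserts the conclusion for \emph{every} minimal spanning tree, not merely for a single fixed run of the algorithm. This is precisely what the universality statement handles: because the ordering of equal-weight $d$-faces in Algorithm~\ref{ALGORITHM: Minimal spanning tree} is free, it can be chosen adaptively to the given $\mst$, after which the greedy optimality established at every scale in Lemma~\ref{LEMMA: optimality of minimal spanning trees} applies verbatim.
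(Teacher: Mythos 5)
Your proposal is correct and follows essentially the same route as the paper, which proves the theorem by combining Lemma~\ref{LEMMA: Algorithm for Minimal Spanning Trees}(\ref{LEMMA: Algorithm for Minimal Spanning Trees: universality}) (every minimal spanning tree arises from Algorithm~\ref{ALGORITHM: Minimal spanning tree} with a suitable ordering, giving $\rmst = \armst$ at all scales) with Lemma~\ref{LEMMA: optimality of minimal spanning trees}. Your explicit attention to the quantifier over all minimal spanning trees is exactly the role the universality statement plays in the paper's one-line proof.
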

			
			\begin{proof}
				By Lemma~\ref{LEMMA: Algorithm for Minimal Spanning Trees}(\ref{LEMMA: Algorithm for Minimal Spanning Trees: universality}) and Lemma~\ref{LEMMA: optimality of minimal spanning trees}.
			\end{proof}

	%4============
	\section{Homologically Persistent $d$-Skeleton}\label{SECTION: homologically persistent skeleton}
		
		We proved in Theorem~\ref{THEOREM: Optimality of Minimal Spanning Trees} that homology of the minimal spanning $d$-tree matches the homology of a weighted simplex up to dimension $d-1$ for all parameter values. The purpose of the homologically persistent skeleton is to add and remove $d$-faces, called critical $d$-faces, in a way that ensures an isomorphism of homology groups in dimension $d$ as well.
		
		%4.1---------
		\subsection{Critical Faces of a Weighted Simplex}
			
			Fix a minimal spanning $d$-tree $\mst$ of a weighted simplex $\wsx$.
			
			\begin{definition}
			\label{def: critical_face}
				A $d$-face $K$ of $\wsx$ is \df{critical} when $K$ is not in $\mst$.
			\end{definition}
			
			In order to obtain isomorphisms on the level of homology in Theorem~\ref{THEOREM: fittingness and optimality of reduced skeleton}, critical faces play a crucial role as generators of homology (at all stages $\alpha \in \RR$). However, a critical face might contribute to many nontrivial cycles, so the connection between critical $d$-faces and generators in $\hm[d]{\rwsx}$ is not canonical in general. We resolve this issue by using relative homology.
			
			\begin{lemma}\label{LEMMA: critical faces and relative homology}
				Let $\alpha \in \RR$ and let $\scx$ be a subcomplex of $\rwsx$ which contains $\rmst$. Let $K_1, K_2, \ldots, K_m$ be the critical $d$-faces in $\scx$.
				\begin{enumerate}
					\item\label{LEMMA: critical faces and relative homology: relative homology classes}
						Each $K_i$ represents a relative homology class $[K_i] \in \hm[d]{\scx, \rmst}$.
					\item\label{LEMMA: critical faces and relative homology: generators}
						The classes $[K_1], [K_2], \ldots, [K_m]$ generate $\hm[d]{\scx, \rmst}$.
					\item\label{LEMMA: critical faces and relative homology: basis}
						If $\scx$ is a $d$-complex, the classes $[K_1], [K_2], \ldots, [K_m]$ form a basis of $\hm[d]{\scx, \rmst}$.
				\end{enumerate}
			\end{lemma}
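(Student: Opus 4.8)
The plan is to do everything at the level of the relative chain complex $\ch{\scx, \rmst} = \ch{\scx}/\ch{\rmst}$ over the fixed coefficient field $\field$, and to lean on a single structural observation. Since $\rmst \subseteq \scx \subseteq \rwsx$ and, by Theorem~\ref{THEOREM: Optimality of Minimal Spanning Trees}, $\rmst$ is a $d$-spanning $d$-forest in $\rwsx$, we get $\skl[d-1]{(\rmst)} = \skl[d-1]{(\scx)} = \skl[d-1]{(\rwsx)}$; in other words $\scx$ and $\rmst$ have exactly the same faces in every dimension $\le d-1$, so $\ch[i]{\scx, \rmst} = 0$ for all $i \le d-1$. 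Thus the relative chain complex is concentrated in degrees $\ge d$, and any $d$-chain of $\scx$ is automatically a relative $d$-cycle of the pair $(\scx, \rmst)$.

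For item~(\ref{LEMMA: critical faces and relative homology: relative homology classes}) this is now immediate: viewing $K_i$ as a $d$-chain in $\ch[d]{\scx}$, its class $\overline{K_i}$ in $\ch[d]{\scx, \rmst}$ has relative boundary in $\ch[d-1]{\scx, \rmst} = 0$, hence represents a class $[K_i] \in \hm[d]{\scx, \rmst}$. For item~(\ref{LEMMA: critical faces and relative homology: generators}) I would next identify $\ch[d]{\scx, \rmst}$: a $d$-face of $\scx$ is either one of the critical faces $K_1, \dots, K_m$, or else lies in $\mst$, in which case its weight is $\le \alpha$ (because $\scx \subseteq \rwsx$) and hence it lies in $\rmst$. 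So $\ch[d]{\scx, \rmst}$ is free with basis $\overline{K_1}, \dots, \overline{K_m}$. Because the relative chain complex vanishes in degree $d-1$, every relative $d$-chain is a relative $d$-cycle, i.e. $\cy[d]{\scx, \rmst} = \ch[d]{\scx, \rmst}$; passing to the quotient by $\bd[d]{\scx, \rmst}$ shows that $\hm[d]{\scx, \rmst}$ is generated by the images $[K_1], \dots, [K_m]$.

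For item~(\ref{LEMMA: critical faces and relative homology: basis}), if $\scx$ is a $d$-complex then there are no $(d+1)$-faces, so $\ch[d+1]{\scx} = 0$, whence $\bd[d]{\scx, \rmst} = 0$ and the quotient map of the previous paragraph is an isomorphism $\hm[d]{\scx, \rmst} \cong \ch[d]{\scx, \rmst}$; the right-hand side is free on $\overline{K_1}, \dots, \overline{K_m}$, so $[K_1], \dots, [K_m]$ is a basis. I do not expect a genuine obstacle here; the only point that warrants care is pinning down that $\ch[d]{\scx, \rmst}$ is exactly the free $\field$-module on the critical $d$-faces of $\scx$, which uses both the $d$-spanning property of $\rmst$ (Theorem~\ref{THEOREM: Optimality of Minimal Spanning Trees}) — giving $\ch[d-1]{\scx, \rmst} = 0$ — and the fact that being a face of $\scx$ forces weight $\le \alpha$, so that every non-critical $d$-face of $\scx$ already lies in $\rmst$.
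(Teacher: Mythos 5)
Your proof is correct and follows essentially the same route as the paper's: both arguments rest on the observations that every non-critical $d$-face of $\scx$ already lies in $\rmst$ (so the relative classes of the critical faces span everything in degree $d$) and that a $d$-complex has no $(d+1)$-chains, so there are no nontrivial relative boundaries, giving linear independence. Your packaging via the vanishing of $\ch[i]{\scx, \rmst}$ for $i \leq d-1$ is just a slightly more structural phrasing of the paper's chain-level argument (which notes that $\partial_d K_i$ lies in the $(d-1)$-skeleton, hence in $\rmst$), so there is no substantive difference.
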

			
			\begin{proof}
				\begin{enumerate}
					\item
						By assumption $K_i$ is in $\scx$. The boundary of $K_i$ is in the $(d-1)$-skeleton of $\rwsx$ and thus also in $\rmst$, meaning that $K_i$ is a relative $d$-cycle. Hence $[K_i] \in \hm[d]{\scx, \rmst}$.
					\item
						Take any $[z] \in \hm[d]{\scx, \rmst}$. We write $z = \sum_{i} c_i F_i$, where $c_i \in \field$ and $F_i$s are $d$-faces of $\scx$. Whenever $F_i$ is in the minimal spanning tree, $[F_i] = 0$ in $\hm[d]{\scx, \rmst}$, which implies that $[z] = \sum_{i, F_i \notin \rmst} c_i [F_i]$. The class $[z]$ can therefore be expressed as a linear combination of classes represented by critical faces.
					\item
						Suppose we have $\sum_{i = 1}^m c_i [K_i] = 0$; then $\big[\sum_{i = 1}^m c_i K_i\big] = 0$. This means there exist $v \in \ch[d+1]{\scx}$ and $u \in \ch[d]{\rmst}$ such that
						\[
							\partial_{d+1}{v} = u + \sum_{i = 1}^m c_i K_i.
						\]
						But as a $d$-complex, $\scx$ only has $0$ as a $(d+1)$-chain, so we get
						\[
							u + \sum_{i = 1}^m c_i K_i = 0.
						\]
						Write $u = \sum_{j = 1}^k d_j F_j$ where $F_j$s are $d$-faces in $\rmst$. Thus
						\[
							\sum_{j = 1}^k d_j F_j + \sum_{i = 1}^m c_i K_i
						\]
						is the zero chain, and since $d$-faces form a basis of the space of $d$-chains, all the coefficients must be zero. In particular, $c_i$s are zero, proving the desired linear independence.
				\end{enumerate}
			\end{proof}
			
			To build the homologically persistent skeleton associated to $\wsx$, we add and remove critical $d$-faces, to which we assign birth and death times inductively (Subsection~\ref{SECTION: Critical Faces}). The following lemma guarantees that for $d > 0$ all homology classes generated by critical faces eventually die.
			
			\begin{lemma}\label{LEMMA: late relative homology}
				For any $\alpha \in \RR_{\geq w(\pc)}$ we have
				\[
					\hm[d]{\rwsx, \rmst} \ism \hm[d]{\wsx, \mst} \ism \hm[d]{\wsx} \ism
					\begin{cases}
						0 & \text{if } d > 0, \\
						\field & \text{if } d = 0.
					\end{cases}
				\]
			\end{lemma}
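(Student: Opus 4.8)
The plan is to unwind the definitions at the top scale and then feed everything into Lemma~\ref{LEMMA: spanning subcomplex}. For $\alpha \in \RR_{\geq w(\pc)}$ monotonicity of $w$ forces every face $A \subseteq \pc$ to satisfy $w(A) \leq w(\pc) \leq \alpha$, so $\rwsx = \inhpst(\pc) = \wsx$ as complexes and likewise $\rmst = \mst$. Hence the first isomorphism $\hm[d]{\rwsx, \rmst} \ism \hm[d]{\wsx, \mst}$ is literally an equality of pairs and needs no argument.

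For the middle isomorphism I would first record that $\mst$ is a $(d-1)$-fitting $d$-spanning $d$-forest in $\wsx$. By Definition~\ref{DEFINITION: Minimal spanning tree} it is a spanning $d$-tree, hence a $d$-spanning $d$-forest; and it is $(d-1)$-fitting in $\wsx$ because a $d$-tree has the (reduced) homology of a point up to dimension $d$, which is matched by the contractible complex $\wsx$ — equivalently, this is exactly the $\alpha \geq w(\pc)$ instance of Theorem~\ref{THEOREM: Optimality of Minimal Spanning Trees} applied to $\rmst[w(\pc)] = \mst$ in $\rwsx[w(\pc)] = \wsx$. Now apply Lemma~\ref{LEMMA: spanning subcomplex}(\ref{LEMMA: spanning subcomplex: connecting relative and absolute homology}) with $\cx = \scx = \wsx$ and $F = \mst$: the two vertical maps become identities, so the lemma's assertion that the horizontal arrows are isomorphisms gives precisely that $\hm[d]{(\wsx, \emptyset) \hookrightarrow (\wsx, \mst)}$ is an isomorphism, i.e. $\hm[d]{\wsx} \ism \hm[d]{\wsx, \mst}$. (One could also read this directly off the long exact sequence of the pair $(\wsx, \mst)$: the term $\hm[d]{\mst} = 0$ kills the incoming map, and $(d-1)$-fittingness makes $\hm[d-1]{\mst \hookrightarrow \wsx}$ injective, so the sequence pinches $\hm[d]{\wsx}$ isomorphically onto $\hm[d]{\wsx, \mst}$.)

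For the final isomorphism, $\wsx = \inhpst(\pc)$ is (a geometric realization of) a simplex, hence contractible, so $\hm[d]{\wsx}$ is $0$ when $d > 0$ and $\field$ when $d = 0$; this is the classical computation of the homology of a point.

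There is no serious obstacle here: the statement is essentially a specialization of the machinery already developed. The only spot that warrants a moment's care is the $d = 1$ case of the $(d-1)$-fitting claim, where ``reduced homology of a point up to dimension $d$'' reduces to the assertion that a spanning $1$-tree of the complete graph on $\pc$ is connected, so that $\hm[0]{\mst \hookrightarrow \wsx}$ is an isomorphism; for $d \geq 2$ both sides of that map vanish and the claim is immediate.
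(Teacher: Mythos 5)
Your argument is correct and is essentially the paper's: the paper also proves the middle isomorphism from the long exact sequence of the pair $(\wsx,\mst)$, handling $d\geq 2$ via $\hm[d]{\mst}=\hm[d-1]{\mst}=0$, $d=1$ via the fact that $\hm[0]{\mst}\to\hm[0]{\wsx}$ is an isomorphism (so the connecting boundary map vanishes), and $d=0$ via $\mst=\emptyset$ --- which is exactly the content of your $(d-1)$-fittingness claim, just done by cases. One minor caveat: invoking Lemma~\ref{LEMMA: spanning subcomplex}(\ref{LEMMA: spanning subcomplex: connecting relative and absolute homology}) with $\scx=\cx=\wsx$ does not literally satisfy that lemma's standing hypothesis that $\scx$ be a $d$-subcomplex of $\cx$, but this is harmless because your parenthetical long-exact-sequence argument (or applying the lemma with $\scx=\skl[d]{\wsx}$ instead) yields the same conclusion.
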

			
			\begin{proof}
				The first isomorphism is obvious. So is the last one, since $\wsx$ is contractible (it is a simplex). As for the middle one, consider first the case $d \geq 2$. In the long exact sequence of a pair
				\[
					\ldots \to \hm[d]{\mst} \to \hm[d]{\wsx} \to \hm[d]{\wsx, \mst} \to \hm[d-1]{\mst} \to \ldots
				\]
				we have $\hm[d]{\mst} = \hm[d-1]{\mst} = 0$, so we get the desired isomorphism. If $d = 1$, the map $\hm[0]{\mst} \to \hm[0]{\wsx}$, induced by the inclusion, is an isomorphism $\field \ism \field$, so the boundary map ${\hm[1]{\wsx, \mst} \to \hm[0]{\mst}}$ is zero. Hence ${\hm[1]{\wsx} \to \hm[1]{\wsx, \mst}}$ is surjective. It is also injective since $\hm[1]{\mst} = 0$. If $d = 0$, then $\mst = \emptyset$, so ${\hm[0]{\wsx} \ism \hm[0]{\wsx, \mst}}$.
			\end{proof}
		
		\subsection{Birth and Death of a Critical Face}\label{SECTION: Critical Faces}
			
			For each critical $d$-face we define its \df{birth time} (or simply \df{birth}) to be its weight. We wish to define the death time of a critical face as the parameter value at which the homology generator it created dies, however, it can happen that multiple critical faces enter at the same time. In that case assigning death times correctly is critical for Theorem~\ref{THEOREM: fittingness and optimality of reduced skeleton} to hold.
			
			\begin{example}\label{Example:Kurlin}
				Consider the point cloud depicted in Figure~\ref{FIGURE: fixed death times} for $d = 1$. The only two critical $1$-faces, which do not immediately die, are depicted in red (they appear at the parameter value $1$). The generators they create die at times $\frac{5}{4}$ and $\frac{\sqrt{13}}{2}$, but since they are born at the same time, the question that arises is which death time to associate to which critical face. It turns out that for Theorem~\ref{THEOREM: fittingness and optimality of reduced skeleton} to hold, the choice of assignments in Figure~\ref{FIGURE: unique death time} is the only valid one. However, that does not mean that we never have any freedom of assigning death times. A minor change in the example (see Figure~\ref{FIGURE: arbitrary death times}) allows us two possibilities, both valid for Theorem~\ref{THEOREM: fittingness and optimality of reduced skeleton}.

				\begin{figure}[!ht]
					\centering
					\begin{tabular}{ccccccc}
						\includegraphics[scale=0.57]{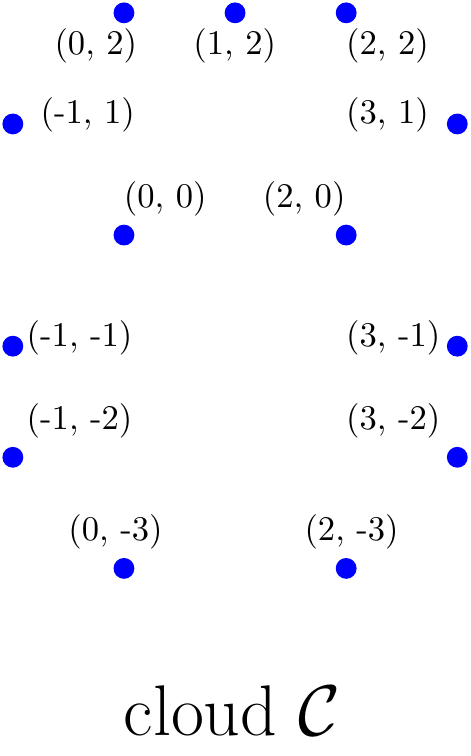} && \quad &
						\includegraphics[scale=0.57]{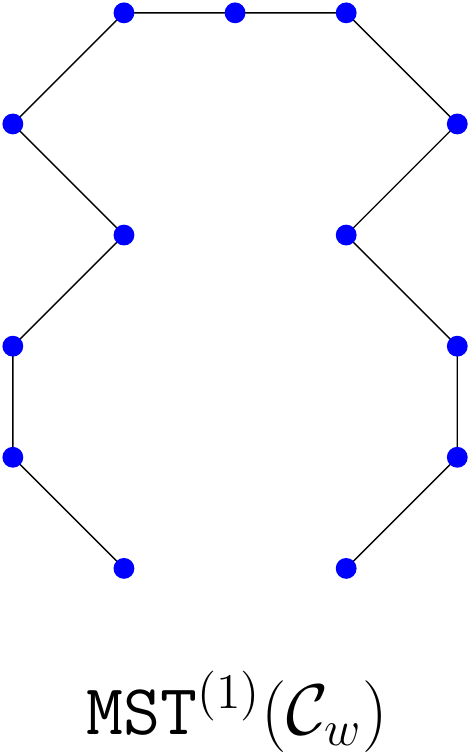} && \quad &
						\includegraphics[scale=0.57]{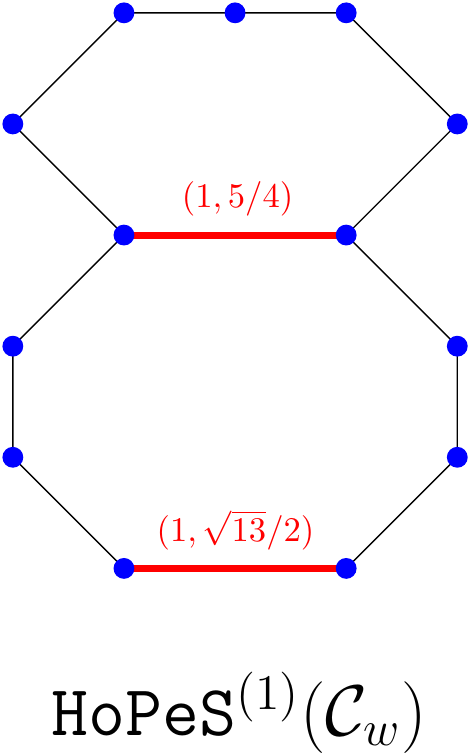}
					\end{tabular}
					\caption{Cloud $\pc$ whose simplex $\wsx$ has weights from its \v{C}ech complex, its minimal spanning 1-tree and its homologically persistent $1$-skeleton.}\label{FIGURE: fixed death times}\label{FIGURE: unique death time}
				\end{figure}
				
				\begin{figure}[!ht]
					\centering
				\hspace*{-1.75cm}	\begin{tabular}{ccccccc}
						\includegraphics[scale=0.57]{cloudcoord.pdf} && \quad &
						\includegraphics[scale=0.57]{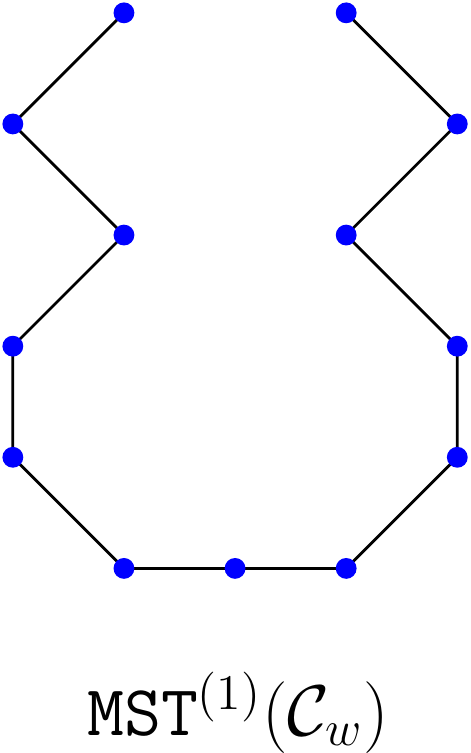} && \quad &
						\includegraphics[scale=0.57]{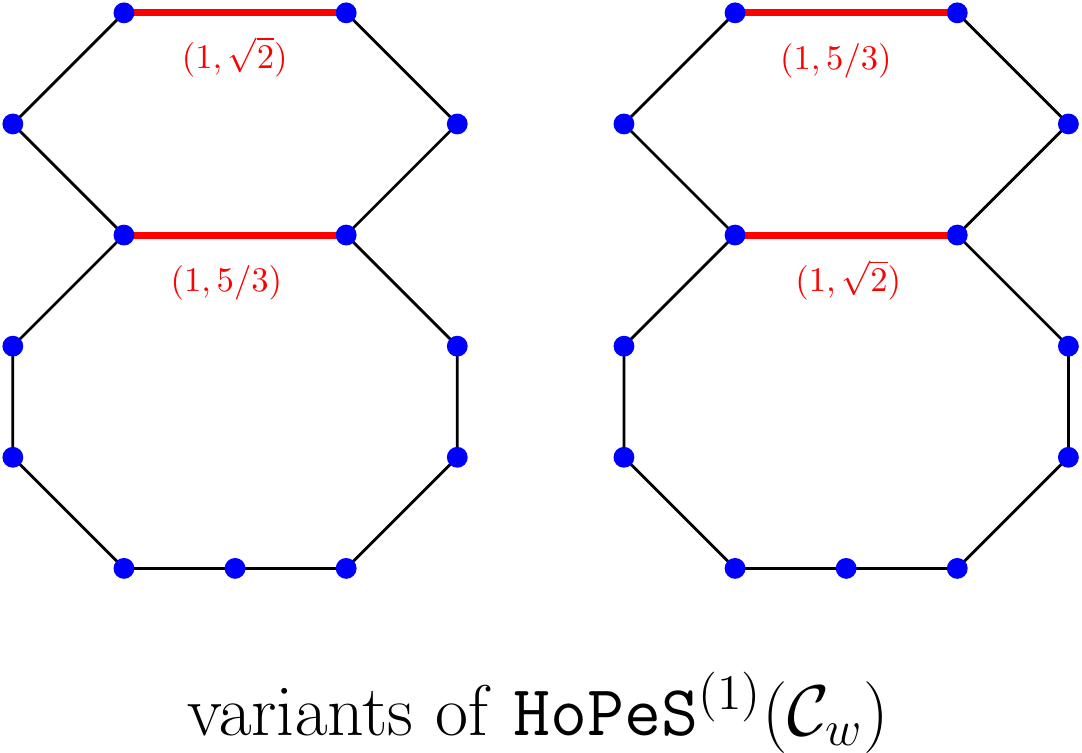}
					\end{tabular}
					\caption{Cloud $\pc$ whose simplex $\wsx$ has weights from its \v{C}ech complex, its minimal spanning $1$-tree and two possible homologically persistent $1$-skeleta.}\label{FIGURE: arbitrary death times}
				\end{figure}
			\end{example}
			
			As Example~\ref{Example:Kurlin} shows, we need to know when and to what extent the assignment of death times is determined. We describe an algorithm, which assigns death times to all critical $d$-faces and determines exactly how much freedom we have for these assignments.
			
			Deaths can only occur at times when a simplex is added to $\rwsx$, \ie at values in the image $\im(w)$. We go through $\im(w)$ with $\alpha$ in increasing order and for each such $\alpha \in \im(w)$ decide which (if any) critical $d$-faces die at $\alpha$.
			
			\begin{definition}[Deaths of Critical Faces]\label{def:deaths_critical_faces}
				Define $\crit \dfeq \set{K_1, K_2, \ldots, K_s}$ to be the set of critical $d$-faces born before or at $\alpha$ that have not yet been assigned a death time. By Lemma~\ref{LEMMA: critical faces and relative homology} the classes $[K_1], [K_2], \ldots, [K_s]$ form a basis of $\hm[d]{(\rmst \cup \crit, \rmst)}$. Denote
				\[
					f \dfeq \hm[d]{(\rmst \cup \crit, \rmst) \hookrightarrow (\rwsx, \rmst)}
				\]
				and set $r \dfeq \dim\ker(f)$. Choose a basis $\set{b_1, b_2, \ldots, b_r}$ of $\ker(f)$. We can expand each basis vector as
				\[
					b_i = \sum_{j = 1}^s c_{ij} [K_j]
				\]
				with $c_{ij} \in \field$. Consider the system of equations (in the field $\field$)
				\[
					\sum_{j = 1}^s c_{ij} x_j = 0
				\]
				for $i \in \intcc[\NN]{1}{r}$. Since basis elements are linearly independent, so are these equations. Thus there are $r$ leading variables, for which the system may be solved, expressing them with the remaining $s - r$ free variables. Let $I \subseteq \intcc[\NN]{1}{s}$ be the set (possibly empty, if $\ker(f)$ is trivial) of indices of leading variables. 
				For each $i \in I$ we declare that the \df{death time} of the critical face $K_i$ is $\alpha$.
			
				Depending on the system of equations, we might have many possible choices, which variables to choose as the leading ones. No further restriction on this choice is necessary for Theorem~\ref{THEOREM: fittingness and optimality of reduced skeleton}(\ref{THEOREM: fittingness and optimality of reduced skeleton: fittingness}) (fittingness), but to get the rest of the theorem (optimality), we need to further insist on the \df{elder rule} (compare with the elder rule for the construction of the persistence diagram~\cite[page~151]{eharer}): among all available choices for the set of leading variables, choose the one with the largest total weight. There may be more than one set of possible leading variables with the maximal total weight --- this is the amount of freedom we have when choosing death times.
				
				If $d \geq 1$, this process assigns death times to all critical $d$-faces: if any are still left at $\alpha = w(\pc)$, they all die at that time since $\hm[d]{\rwsx[w(\pc)], \rmst[w(\pc)]} = 0$ by Lemma~\ref{LEMMA: late relative homology}. However, if $d = 0$, $\hm[d]{\rwsx[w(\pc)], \rmst[w(\pc)]}$ is $1$-dimensional rather than $0$-dimensional. As such, we declare the death time of the final critical $0$-face to be $\infty$. This makes sense: critical $0$-faces (\ie vertices) die as the complex becomes more and more connected, but in the end a single connected component endures indefinitely.
			\end{definition}
			
			Here is the summary of this procedure, given as an explicit algorithm.
			\begin{algorithm}[H]
			\label{alg:deaths}
				\caption{Death times of critical $d$-faces}\label{ALGORITHM: Death times of critical faces}
				\begin{algorithmic}[1]
					\STATE $\death(K) \dfeq \infty$ for all $K \in \skl[d]{\wsx} \setminus \mst$
					\STATE $w_1, w_2, \ldots, w_n \dfeq$ elements of $\im(w)$, in order
					\FOR{$l = 1$ \TO $n$}
						\STATE $\crit \dfeq \set{K_1, K_2, \ldots, K_s} \dfeq \set[1]{K \in \skl[d]{\rwsx[w_l]} \setminus \rmst[w_l]}{\death(K) = \infty}$
						\STATE $f \dfeq \hm[d]{(\rmst[w_l] \cup \crit, \rmst[w_l]) \hookrightarrow (\rwsx[w_l], \rmst[w_l])}$
						\STATE $\set{b_1, b_2, \ldots, b_r} \dfeq$ a choice of a basis of $\ker(f)$
						\FOR{$i = 1$ \TO $r$}
							\FOR{$j = 1$ \TO $s$}
								\STATE $c_{ij} \dfeq$ coefficient at $[K_j]$ in the expansion of $b_i$
							\ENDFOR
						\ENDFOR
						\STATE $I \dfeq$ a choice of an $r$-element subset of $\intcc[\NN]{1}{s}$, such that
							\begin{itemize}[noitemsep, topsep=0pt]
								\item\hspace{-3ex}
									the system $\big(\sum_{j = 1}^s c_{ij} x_j = 0\big)_{i \in \intcc[\NN]{1}{r}}$ is solvable on variables $\set{x_j}{j \in I}$,
								\item\hspace{-3ex}
									the total weight of $\set{K_j}{j \in I}$ is maximal among such subsets
							\end{itemize}
						\STATE $\death(K_j) \dfeq w_l$ for all $j \in I$
					\ENDFOR
				\end{algorithmic}
			\end{algorithm}
			
			For any critical $d$-face $K$ define its \df{lifespan} to be $\death(K) - \birth(K)$. It is possible for a critical $d$-face $K$ to have the lifespan $0$, if the homology class $[K]$ gets killed by some $(d+1)$-face(s) that have the same weight as $K$.
			
			\begin{lemma}\label{LEMMA: critical basis}
				For any $\alpha \in \RR$ define
				\[
					\critl \dfeq \set{K \text{ critical $d$-face}}{\textrm{birth}(K) \leq \alpha < \textrm{death}(K)}.
				\]
				The classes, represented by faces in $\critl$, form a basis of $\hm[d]{\rwsx, \rmst}$.
			\end{lemma}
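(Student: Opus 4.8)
The plan is to prove the statement by induction on the elements $w_1 < w_2 < \dots < w_n$ of $\im(w)$. All three objects $\rwsx$, $\rmst$, $\critl$ are constant on each interval $\intco{w_l}{w_{l+1}}$ (birth and death times lie in $\im(w) \cup \{\infty\}$, and faces enter $\rwsx$ and $\rmst$ only at values in $\im(w)$), and for $\alpha < w_1$ everything is empty, so it suffices to show for each $l \in \intcc[\NN]{1}{n}$ that $\set{[K]}{K \in \critl[w_l]}$ is a basis of $\hm[d]{\rwsx[w_l], \rmst[w_l]}$, assuming the analogous statement at $w_{l-1}$.

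Fix $l$ and recall the data of Definition~\ref{def:deaths_critical_faces} at scale $w_l$: the set $\crit[w_l] = \set{K_1, \dots, K_s}$ of critical $d$-faces born at or before $w_l$ not yet assigned a death time, the map $f = \hm[d]{(\rmst[w_l] \cup \crit[w_l], \rmst[w_l]) \hookrightarrow (\rwsx[w_l], \rmst[w_l])}$, the number $r = \dim\ker(f)$, and the index set $I \subseteq \intcc[\NN]{1}{s}$ of leading variables, so that $\set{K_i}{i \in I}$ are exactly the faces declared to die at $w_l$. I would first record the bookkeeping facts that $\critl[w_l] = \set{K_j}{j \notin I}$ (each critical $d$-face born $\le w_l$ is either already dead, hence has death $\le w_{l-1} < w_l$ and is excluded; or is some $K_j$ with $j \in I$, with death $= w_l$, excluded; or is some $K_j$ with $j \notin I$, still alive, included) and that $\crit[w_l]$ is the disjoint union of $\critl[w_{l-1}]$ with the critical $d$-faces born exactly at $w_l$; in particular $\critl[w_{l-1}] \subseteq \crit[w_l]$.

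Two things remain: that $f$ is surjective, and that $\set{[K_j]}{j \notin I}$ is linearly independent in $\hm[d]{\rwsx[w_l], \rmst[w_l]}$. For surjectivity, Lemma~\ref{LEMMA: critical faces and relative homology}(\ref{LEMMA: critical faces and relative homology: generators}) says the classes of all critical $d$-faces born $\le w_l$ generate $\hm[d]{\rwsx[w_l], \rmst[w_l]}$, so it is enough to express each already-dead critical face $K$ (with $w(K) \le \death(K) \le w_{l-1}$) in terms of $[K_1], \dots, [K_s]$; but such a $K$ lies in $\rwsx[w_{l-1}]$ with $\partial K$ inside $\rmst[w_{l-1}]$ by $d$-spanningness (Definition~\ref{DEFINITION: Types of subcomplexes}), so by the induction hypothesis $[K]$ is a combination of classes of faces in $\critl[w_{l-1}] \subseteq \crit[w_l]$ inside $\hm[d]{\rwsx[w_{l-1}], \rmst[w_{l-1}]}$, and pushing this relation forward along $\hm[d]{\rwsx[w_{l-1}], \rmst[w_{l-1}]} \to \hm[d]{\rwsx[w_l], \rmst[w_l]}$ finishes the point. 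For linear independence, Lemma~\ref{LEMMA: critical faces and relative homology}(\ref{LEMMA: critical faces and relative homology: basis}) gives that $[K_1], \dots, [K_s]$ is a basis of the domain of $f$; identifying that domain with $\field^s$, the solution space of $\big(\sum_{j} c_{ij} x_j = 0\big)_{i \in \intcc[\NN]{1}{r}}$ is exactly $(\ker f)^{\perp}$, and ``solvability on the leading variables $\set{x_j}{j \in I}$'' means $(\ker f)^{\perp} \cap \linspan{e_j : j \in I} = 0$, which (taking orthogonal complements in $\field^s$ and comparing dimensions $r + (s-r) = s$) is equivalent to the direct-sum decomposition $\ker f \oplus \linspan{e_j : j \notin I} = \field^s$; hence $f$ restricts to an isomorphism $\linspan{e_j : j \notin I} \xrightarrow{\sim} \im f$, so $\set{f([K_j])}{j \notin I} = \set{[K_j]}{j \notin I}$ is a basis of $\im f$. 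Combined with surjectivity of $f$, this exhibits $\set{[K_j]}{j \notin I} = \set{[K]}{K \in \critl[w_l]}$ as a basis of $\hm[d]{\rwsx[w_l], \rmst[w_l]}$, closing the induction. The argument is uniform in $d$ (for $d \ge 1$ it recovers that all critical faces left at $w(\pc)$ die there, by Lemma~\ref{LEMMA: late relative homology}; for $d = 0$ the single surviving vertex matches $\beta_0 = 1$).

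I expect the main obstacle to be organizing the induction cleanly: keeping straight which critical faces belong to $\crit[w_l]$, which to $\critl[w_{l-1}]$, and which to the pool of already-dead faces, and invoking the induction hypothesis at the right stage; together with the small piece of linear algebra that converts the phrase ``the system is solvable on the leading variables'' into ``$f$ maps the span of the non-leading basis vectors isomorphically onto $\im f$''. Everything else is naturality of inclusion-induced maps plus the two already-proved parts of Lemma~\ref{LEMMA: critical faces and relative homology}. Note that the elder-rule refinement in Definition~\ref{def:deaths_critical_faces} plays no role here --- any admissible choice of leading variables yields the conclusion.
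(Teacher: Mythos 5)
Your proof is correct and takes essentially the same route as the paper's: both rest on Lemma~\ref{LEMMA: critical faces and relative homology} (generation by critical faces and the basis of $\hm[d]{\rmst \cup \crit, \rmst}$) together with the leading-variable linear algebra built into Definition~\ref{def:deaths_critical_faces}, pushing relations forward along inclusion-induced maps of pairs, and you correctly observe that the elder rule is irrelevant here. The only difference is organizational --- the paper expresses dead faces via the relation recorded at each face's death time, inducting on decreasing death times, and proves linear independence directly at each $\alpha$, whereas you run a single induction over the values of $\im(w)$ --- which is a repackaging rather than a genuinely different argument.
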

			
			\begin{proof}
				It suffices to check this for $\alpha \in \im(w)$. We know that $[K]$s with ${\birth(K) \leq \alpha}$ generate $\hm[d]{\rwsx, \rmst}$ by Lemma~\ref{LEMMA: critical faces and relative homology}. We need to check that $[K]$ represented by a critical face, which is dead at $\alpha$, can be expressed by those still living at $\alpha$. We prove this inductively on decreasing death times. Let $\delta \leq \alpha$ be the death time of $K$. By Definition~\ref{def:deaths_critical_faces} we can write 
				\[
					[K] = \sum_{j=1}^s c_{j}[K_j]
				\]
				in $\hm[d]{\rwsx[\delta], \rmst[\delta]}$, where $\critl[\delta] = \set{K_1, K_2, \ldots, K_s}$, \ie death times of $K_j$ are larger than $\delta$. By applying $\hm[d]{(\rwsx[\delta], \rmst[\delta]) \hookrightarrow (\rwsx, \rmst)}$ we can see that this equation also holds in $\hm[d]{\rwsx, \rmst}$. By the induction hypothesis all of these $[K_j]$ can be expressed by the still living critical faces and therefore, so can $[K]$.
				
				As for linear independence, redefine $K_1, \ldots, K_s$ to be all the faces in $\critl$. Assume that $\sum_{j = 1}^s c_j [K_j] = 0$ in $\hm[d]{\rwsx, \rmst}$. This implies that 
				\[
					\sum_{j = 1}^s c_j [K_j] \in \ker\hm[d]{(\rmst \cup \crit, \rmst) \hookrightarrow (\rwsx, \rmst)}.
				\]
				By assumption none of $[K_j]$s die at $\alpha$, so this kernel is trivial, meaning $\sum_{j = 1}^s c_j [K_j] = 0$ in $\hm[d]{\rmst \cup \crit, \rmst}$. Since $[K_j]$s form a basis of this homology (Lemma~\ref{LEMMA: critical faces and relative homology}), the coefficients $c_j$ are zero.
			\end{proof}
			
		\subsection{Optimality of a Homologically Persistent $d$-Skeleton}\label{SUBSECTION: fittingness and optimality of reduced skeleton}
			
			We continue following the blueprint from~\cite{K15} where the homologically persistent $1$-skeleton was constructed by taking a minimal spanning $1$-tree and adding labeled critical edges. However, we find it more convenient to have all simplices in the homologically persistent skeleton to be of the same type, so we shall label \emph{all} faces. Define a \df{label} to be a pair $(l, r) \in \eR \times \eR$ such that $0 \leq l < r$. Call $l$ the \df{left label} and $r$ the \df{right label}.
			
			\begin{definition}[homologically persistent skeleton]\label{def:HoPeS}
				Given $d \in \NN$ and a weighted simplex $\wsx$, its \df{homologically persistent $d$-skeleton} $\hps(\wsx)$ is the (choice of a) minimal spanning $d$-tree together with all critical $d$-faces with positive lifespan:
				\[
					\hps(\wsx) \dfeq \mst \cup \set{K \in \skl[d]{\wsx} \setminus \mst}{\birth(K) < \death(K)}.
				\]
				Each face $F$ in $\hps(\wsx)$ is labeled: if $F$ is in $\mst$, by $\big(w(F), \infty\big)$; otherwise by $\big(\birth(F), \death(F)\big)$. We write simply $\hps$ instead of $\hps(\wsx)$ when there is no ambiguity.
			\end{definition}
			
			Note that the set of labels $\set{(l, r) \in \eR \times \eR}{0 \leq l < r}$ can be seen as a form of an interval~domain~\cite{3720}. In particular, we have the \df{information order} $\inford$, given by
			\[
				(l', r') \inford (l'', r'') \sepdfeq l' \leq l'' \ \land \ r' \geq r''.
			\]
			Labeling of $\hps$ is monotone in the following sense. Let $F$ and $G$ be faces in $\hps$ with labels $\ell_F$ and $\ell_G$ respectively. If $F \subseteq G$, then $\ell_F \inford \ell_G$.
			
			This means that $\hps$ is a kind of a `weighted complex' itself --- except that instead of the weighting mapping into $\RR_{\geq 0}$ with its usual order $\leq$, it maps into the interval domain of labels, equipped with the information order. The consequence is that we can define the reduced version of the homologically persistent skeleton for any $\alpha \in \RR$:
			\[
				\rhps(\wsx) \dfeq \set[1]{\big(F, (l, r) \big) \in \hps(\wsx)}{l \leq \alpha < r}.
			\]
			As usual, we shorten $\rhps(\wsx)$ to $\rhps$ when there is no ambiguity. Due to monotonicity of labeling, $\rhps$ is a (labeled) simplicial complex.
			
			\begin{example}\label{Hopesexample}
				Let $\pc$ be a point cloud from Example~\ref{MSTexample}. 
				To make pictures understandable for readers, Figure~\ref{Hopes} shows the rather small, low-dimensional complexes $\rhps$, which are accidentally similar to $\wsx$ for most $\alpha$. 
				%The true potential of homologically persistent skeleton lies in working with very large, high-dimensional complexes.
				\begin{figure}[!ht]
					\centering
					\hspace*{-0.5cm}\includegraphics{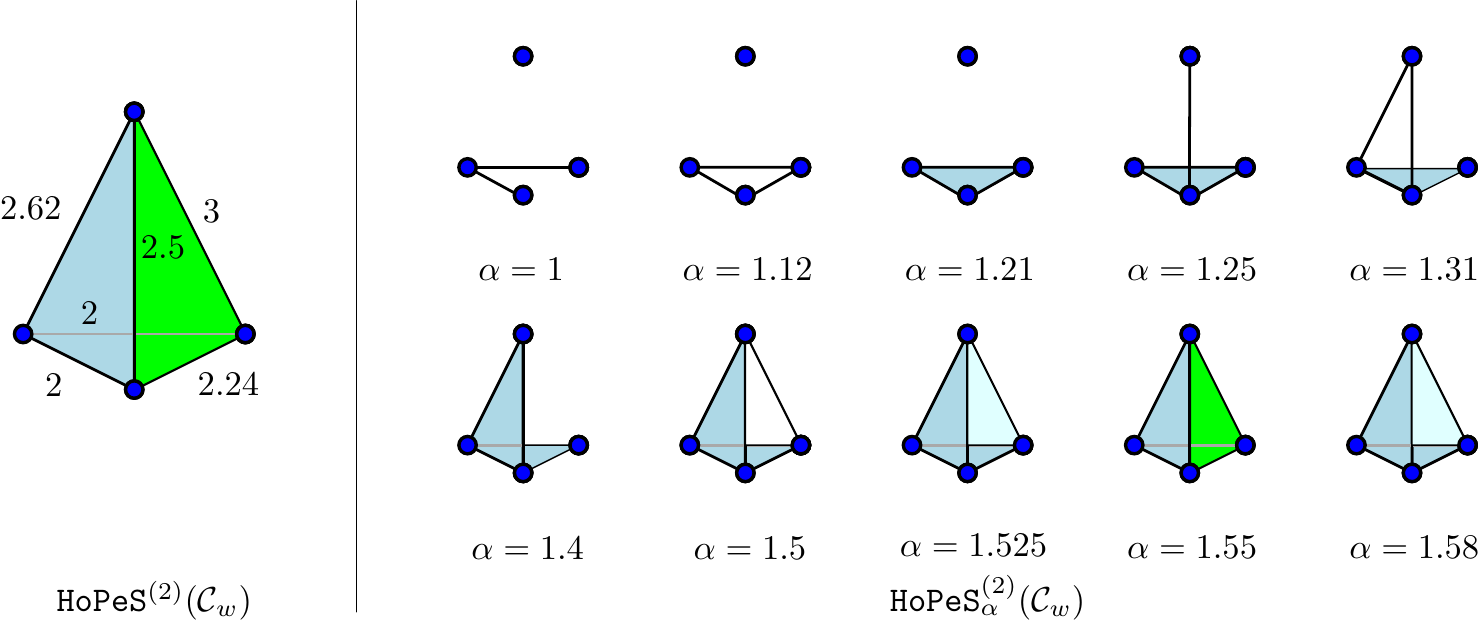}
					\caption{Geometric realizations of $\mathtt{HoPeS}^{(2)}(\wsx)$ and its reduced versions with respect to \v{C}ech filtration of a point cloud $\pc$ with four vertices. $\mathtt{HoPeS}^{(2)}(\wsx)$ consists of the boundary of the tetrahedron, its only critical face marked by green. The remaining $2$-faces are a part of the minimal spanning tree (\cf Figure~\ref{MST}).}\label{Hopes}
				\end{figure}
			\end{example}
			
			\begin{lemma}\label{LEMMA: d isomorphism}
				$\hm[d]{\rhps \hookrightarrow \rwsx}$ is an isomorphism for any $\alpha \in \RR$.
			\end{lemma}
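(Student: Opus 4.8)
The plan is to sandwich $\rhps$ between $\rmst$ and $\rwsx$ and deduce the isomorphism from naturality of the long exact sequence of a pair, feeding in the two basis lemmas already proved. First I would unwind Definition~\ref{def:HoPeS} to describe $\rhps$ concretely: a face $F \in \mst$ is labeled $\big(w(F), \infty\big)$ and so lies in $\rhps$ exactly when $w(F) \leq \alpha$, \ie when $F \in \rmst$, whereas a critical $d$-face $K$ of positive lifespan is labeled $\big(\birth(K), \death(K)\big)$ and so lies in $\rhps$ exactly when $\birth(K) \leq \alpha < \death(K)$. Since every face of $\hps$ of dimension $\leq d-1$ already lies in $\mst$ and $\mst$ is $d$-spanning, it follows that $\rhps$ is a $d$-complex with $\skl[d-1]{\rhps} = \skl[d-1]{\rwsx}$, that $\rmst \subseteq \rhps \subseteq \rwsx$, and that the critical $d$-faces of $\rhps$ are precisely the members of the set $\critl$ from Lemma~\ref{LEMMA: critical basis}.

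Next I would compare the relative homologies over $\rmst$. Since $\rmst$ is a $d$-forest (Theorem~\ref{THEOREM: Optimality of Minimal Spanning Trees}), $\hm[d]{\rmst} = 0$. Applying Lemma~\ref{LEMMA: critical faces and relative homology}(\ref{LEMMA: critical faces and relative homology: basis}) to the $d$-complex $\rhps$, which contains $\rmst$ and whose critical $d$-faces are exactly the elements of $\critl$, the classes $[K]$ for $K \in \critl$ form a basis of $\hm[d]{\rhps, \rmst}$; by Lemma~\ref{LEMMA: critical basis} the very same classes form a basis of $\hm[d]{\rwsx, \rmst}$. As the inclusion of pairs $(\rhps, \rmst) \hookrightarrow (\rwsx, \rmst)$ sends each relative class $[K]$ to $[K]$ on the chain level, it maps a basis bijectively onto a basis, so $\hm[d]{(\rhps, \rmst) \hookrightarrow (\rwsx, \rmst)}$ is an isomorphism.

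To finish, I would invoke naturality of the long exact sequence of a pair for the map $\rhps \hookrightarrow \rwsx$. Because $\hm[d]{\rmst} = 0$, the maps $\hm[d]{\rhps} \to \hm[d]{\rhps, \rmst}$ and $\hm[d]{\rwsx} \to \hm[d]{\rwsx, \rmst}$ are both injective, with images the kernels of the respective connecting maps $\partial_d$ into $\hm[d-1]{\rmst}$. In the resulting morphism of exact sequences the arrow on $\hm[d-1]{\rmst}$ is the identity and, by the previous paragraph, the arrow on the relative $\hm[d]$ term is an isomorphism; hence it restricts to an isomorphism of the two kernels, which by commutativity of the square is exactly $\hm[d]{\rhps \hookrightarrow \rwsx}$. (Equivalently, one may prepend the vanishing groups $\hm[d]{\rmst}$ and quote the five lemma.) The same reasoning applies verbatim when $d = 0$, where $\rmst = \emptyset$.

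I expect the only genuine friction to be in the first step --- tracking precisely which labeled faces of $\hps$ survive into $\rhps$, and verifying that $\rmst$ really sits inside $\rhps$ as a subcomplex --- together with the small but essential observation in the second step that the inclusion of pairs carries the distinguished basis of $\hm[d]{\rhps, \rmst}$ onto that of $\hm[d]{\rwsx, \rmst}$. The homological algebra that completes the proof is routine.
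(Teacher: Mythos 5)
Your proposal is correct and takes essentially the same route as the paper: establish that the critical $d$-faces of $\rhps$ are exactly $\critl$, deduce from Lemma~\ref{LEMMA: critical faces and relative homology}(\ref{LEMMA: critical faces and relative homology: basis}) and Lemma~\ref{LEMMA: critical basis} that $\hm[d]{(\rhps, \rmst) \hookrightarrow (\rwsx, \rmst)}$ carries a basis to a basis, and then transfer this to the absolute map via the long exact sequence of the pair. The paper packages that last transfer step as a citation of Lemma~\ref{LEMMA: spanning subcomplex}(\ref{LEMMA: spanning subcomplex: connecting relative and absolute homology}), which your inline kernel-restriction argument simply re-proves by hand (using only $\hm[d]{\rmst} = 0$ and naturality).
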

			
			\begin{proof}
				By Lemma~\ref{LEMMA: spanning subcomplex}(\ref{LEMMA: spanning subcomplex: connecting relative and absolute homology}) the map $\hm[d]{\rhps \hookrightarrow \rwsx}$ is an isomorphism if and only if the map $\hm[d]{(\rhps, \rmst) \hookrightarrow (\rwsx, \rmst)}$ is. But that follows immediately from Lemma~\ref{LEMMA: critical faces and relative homology}(\ref{LEMMA: critical faces and relative homology: basis}) and Lemma~\ref{LEMMA: critical basis}.
			\end{proof}
			
			\begin{lemma}\label{LEMMA: d-fitting subcomplexes of the reduced weighted simplex}
				Take any $\alpha \in \RR$ and any $d$-fitting $d$-spanning $d$-subcomplex $\scx$ in $\rwsx$. By Lemma~\ref{LEMMA: spanning subcomplex}(\ref{LEMMA: spanning subcomplex: existence of a nice subforest}) $\scx$ contains a $d$-subcomplex which is a $(d-1)$-fitting $d$-spanning $d$-forest; let $F$ denote one with minimal total weight.
				\begin{enumerate}
					\item\label{LEMMA: d-fitting subcomplexes of the reduced weighted simplex: number od d-faces}
						The number of $d$-faces in $\scx$ is
						\[
							\Big(\# \text{$d$-faces in $\rwsx$}\Big) - \beta_d(\rwsx^{(d)}) + \beta_d(\rwsx).
						\]
						The number of $d$-faces in $F$ is
						\[
							\Big(\# \text{$d$-faces in $\rwsx$}\Big) - \beta_d(\rwsx^{(d)}).
						\]
						Consequently, the number of $d$-faces in $\scx \setminus F$ is equal to $\beta_d(\rwsx)$.
					\item\label{LEMMA: d-fitting subcomplexes of the reduced weighted simplex: isomorphisms}
						The diagram
						\[
							\begin{tikzpicture}[baseline=(current  bounding  box.center), scale = 1]
								\node (Ta) at (5, 2) {$ \hm[d]{\scx}$};
								\node (Tb) at (5,0) {$\hm[d]{\rwsx}$};
								\node (Tc) at (8.5, 2) {$\hm[d]{\scx, F}$};
								\node (Td) at (8.5, 0) {$\hm[d]{\rwsx, F}$};
								\path[->]
								(Ta) edge node[left] {{\small $\hm[d]{\scx \hookrightarrow \rwsx}$}} (Tb)
								(Tc) edge node[right] {{\small $\hm[d]{(\scx, F) \hookrightarrow (\rwsx, F)}$}} (Td)
								(Ta) edge node[above, yshift=0.5cm] {{\small $\hm[d]{(\scx, \emptyset) \hookrightarrow (\scx, F)}$}} (Tc)
								(Tb) edge node[below, yshift=-0.5cm] {{\small $\hm[d]{(\rwsx, \emptyset) \hookrightarrow (\rwsx, F)}$}} (Td);
							\end{tikzpicture}
						\]
						commutes and all maps in it are isomorphisms.
					\item\label{LEMMA: d-fitting subcomplexes of the reduced weighted simplex: weight bound}
						The diagram in the previous item induces a bijective correspondence between the set of $d$-faces in $\scx \setminus F$ and the set of dots $(p, q)$ in the persistence diagram $\pdwsx$ with $p \leq \alpha < q$. If a $d$-face $S$ is associated to the dot $(p, q)$, then $p \leq w(S)$.
				\end{enumerate}
			\end{lemma}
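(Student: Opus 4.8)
The plan is to obtain parts~(\ref{LEMMA: d-fitting subcomplexes of the reduced weighted simplex: number od d-faces}) and~(\ref{LEMMA: d-fitting subcomplexes of the reduced weighted simplex: isomorphisms}) directly from the machinery of Lemma~\ref{LEMMA: spanning subcomplex}, and to put essentially all the work into the persistence-diagram bijection of part~(\ref{LEMMA: d-fitting subcomplexes of the reduced weighted simplex: weight bound}). For the counting I apply Lemma~\ref{LEMMA: spanning subcomplex}(\ref{LEMMA: spanning subcomplex: formula}) to the $d$-spanning subcomplex $\scx$ of $\rwsx$ and, separately, to $F$. Using that $\scx$ is $d$-fitting (so $\beta_{d-1}(\scx) = \beta_{d-1}(\rwsx)$ and $\beta_d(\scx) = \beta_d(\rwsx)$), that $F$ is a $(d-1)$-fitting $d$-forest (so $\beta_{d-1}(F) = \beta_{d-1}(\rwsx)$ and $\beta_d(F) = 0$), and that $\beta_{d-1}(\rwsx^{(d)}) = \beta_{d-1}(\rwsx)$ since $(d-1)$-homology only sees faces of dimension at most $d$, the two displayed formulas drop out after rearranging; since $F \subseteq \scx$ share their entire $(d-1)$-skeleton, $\scx \setminus F$ consists solely of $d$-faces, so $|\scx \setminus F| = \beta_d(\rwsx)$. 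For part~(\ref{LEMMA: d-fitting subcomplexes of the reduced weighted simplex: isomorphisms}) I invoke Lemma~\ref{LEMMA: spanning subcomplex}(\ref{LEMMA: spanning subcomplex: connecting relative and absolute homology}) for $F \subseteq \scx \subseteq \rwsx$ (legitimate since the $d$-fitting $\scx$ is in particular $(d-1)$-fitting, and $F$ is a $(d-1)$-fitting $d$-spanning $d$-forest in $\scx$): it gives the commuting square with both horizontal arrows isomorphisms and makes the right vertical arrow an isomorphism iff the left one, namely $\hm[d]{\scx \hookrightarrow \rwsx}$, is --- which it is, by $d$-fittingness. So all four maps are isomorphisms.

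The substance is part~(\ref{LEMMA: d-fitting subcomplexes of the reduced weighted simplex: weight bound}). First I set up a basis: exactly as in Lemma~\ref{LEMMA: critical faces and relative homology}(\ref{LEMMA: critical faces and relative homology: basis}), with $F$ replacing $\rmst$, each $S \in \scx \setminus F$ is a relative $d$-cycle modulo $F$, the classes $[S]$ generate $\hm[d]{\scx, F}$, and by part~(\ref{LEMMA: d-fitting subcomplexes of the reduced weighted simplex: number od d-faces}) there are $\beta_d(\rwsx) = \dim\hm[d]{\scx, F}$ of them, so they form a basis. Transporting through the isomorphisms of part~(\ref{LEMMA: d-fitting subcomplexes of the reduced weighted simplex: isomorphisms}) gives a basis $\set{\eta_S}{S \in \scx \setminus F}$ of $\hm[d]{\rwsx}$, where $\eta_S$ is the image in $\hm[d]{\rwsx}$ of the preimage of $[S]$ under the isomorphism $\hm[d]{\scx} \to \hm[d]{\scx, F}$.

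The key claim, which I expect to be the main obstacle, is that $\eta_S$ has a cycle representative supported on faces of weight at most $w(S)$, so that $\eta_S$ lies in the image of $\hm[d]{\rwsx[w(S)] \hookrightarrow \rwsx}$. Since $\scx$ is a $d$-complex there are no $(d+1)$-chains, so the preimage of $[S]$ is represented by a cycle of the form $S + c$ with $c \in \ch[d]{F}$ and $\partial S = -\partial c$. Suppose $c$ involved a $d$-face $T$ of $F$ with $w(T) > w(S)$. I claim $(F \setminus \set{T}) \cup \set{S}$ is again a $(d-1)$-fitting $d$-spanning $d$-forest in $\scx$: it is $d$-spanning with the same number of $d$-faces as $F$; the relation $\partial S = -\partial c$ together with $T$ occurring in $c$ shows that the span of the $\partial_d$-images of its $d$-faces coincides with that for $F$, keeping the boundary map injective on $d$-chains, hence keeping it a forest; and the Euler characteristic then pins down its $(d-1)$-st Betti number to equal that of $\scx$, giving $(d-1)$-fittingness. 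But its total weight is $\tw(F) - w(T) + w(S) < \tw(F)$, contradicting minimality of $F$. Hence $c$ uses only $d$-faces of weight at most $w(S)$, so $S + c$ lies in $\rwsx[w(S)]$, proving the claim.

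To finish, I order $\scx \setminus F = \set{S_1, \ldots, S_b}$ so that $w(S_1) \leq \cdots \leq w(S_b) \leq \alpha$ (ties broken by any fixed refinement of the weight order) and use the interval decomposition of the dimension-$d$ persistence module $t \mapsto \hm[d]{\rwsx[t]}$ of $\wsx$: the dots $(p,q) \in \pdwsx$ with $p \leq \alpha < q$ number $\dim\hm[d]{\rwsx} = b$, and for $s \leq \alpha$ the number of these with $p \leq s$ is $\dim\im\big(\hm[d]{\rwsx[s] \hookrightarrow \rwsx}\big)$. For each $j$, the independent classes $\eta_{S_1}, \ldots, \eta_{S_j}$ each lie (by the claim, as $w(S_i) \leq w(S_j)$) in $\im\big(\hm[d]{\rwsx[w(S_j)] \hookrightarrow \rwsx}\big)$, so that image has dimension at least $j$; listing those $b$ dots as $(p_{(1)},q_{(1)}), \ldots, (p_{(b)},q_{(b)})$ with $p_{(1)} \leq \cdots \leq p_{(b)}$, this yields $p_{(j)} \leq w(S_j)$. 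The desired bijection is $S_j \leftrightarrow (p_{(j)}, q_{(j)})$, and $p \leq w(S)$ holds by construction. The only remaining care is in making the passage between the isomorphisms of part~(\ref{LEMMA: d-fitting subcomplexes of the reduced weighted simplex: isomorphisms}) and the persistence-module bookkeeping fully explicit.
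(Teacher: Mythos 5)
Your proposal is correct and follows essentially the same route as the paper: parts (1) and (2) via Lemma~\ref{LEMMA: spanning subcomplex}(\ref{LEMMA: spanning subcomplex: formula}) and (\ref{LEMMA: spanning subcomplex: connecting relative and absolute homology}), and part (3) by taking the relative classes of the $d$-faces in $\scx \setminus F$ as a basis, transporting it to $\hm[d]{\rwsx}$, and using minimality of $F$ in an exchange argument to show the representing cycle $S + c$ lives in $\rwsx[w(S)]$. The only difference is that you spell out the exchange step and the rank/sorting bookkeeping for matching faces to dots, which the paper compresses into a one-sentence assertion; this is a welcome elaboration, not a different method.
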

			
			\begin{proof}
				\begin{enumerate}
					\item
						Apply Lemma~\ref{LEMMA: spanning subcomplex}(\ref{LEMMA: spanning subcomplex: formula}) for $\scx$ and $F$ (as subcomplexes of $\rwsx$) and take their properties into account.
					\item
						Use Lemma~\ref{LEMMA: spanning subcomplex}(\ref{LEMMA: spanning subcomplex: connecting relative and absolute homology}) and the assumption that $\scx$ is $d$-fitting in $\rwsx$.
					\item
						Denote
						\begin{align*}
							f &&\dfeq& &&\hm[d]{\scx \hookrightarrow \rwsx} \ \circ \ \big(\hm[d]{(\scx, \emptyset) \hookrightarrow (\scx, F)}\big)^{-1} = \\
							&&=& &&\big(\hm[d]{(\rwsx, \emptyset) \hookrightarrow (\rwsx, F)}\big)^{-1} \ \circ \ \hm[d]{(\scx, F) \hookrightarrow (\rwsx, F)};
						\end{align*}
						this is an isomorphism between $\hm[d]{\scx, F}$ and $\hm[d]{\rwsx}$ by the previous item. Let $S_1, \ldots, S_m$ be $d$-faces in $\scx \setminus F$. By a similar argument as in Lemma~\ref{LEMMA: critical faces and relative homology} the classes $[S_i]$ form a basis of $\hm[d]{\scx, F}$. Hence $f([S_i])$ form a basis of $\hm[d]{\rwsx}$ and are thus in bijective correspondence with dots $(p, q)$ in $\pdwsx$ with $p \leq \alpha < q$. Let us denote the dot, associated to $S_i$, by $(p_i, q_i)$.
						
						Since $F$ has minimal total weight, $S_i$ has the largest weight among faces (with non-zero coefficients) in the cycle which represents $f([S_i])$. Hence the homology class $f([S_i])$ could not be born after $w(S_i)$.
				\end{enumerate}
			\end{proof}
			
			\begin{theorem}[Fittingness and Optimality of Reduced $d$-Skeletons]\label{THEOREM: fittingness and optimality of reduced skeleton}
				The following holds for every weighted simplex $\wsx$, $d \in \NN$, and $\alpha \in \RR$.
				\begin{enumerate}
					\item\label{THEOREM: fittingness and optimality of reduced skeleton: fittingness}
						$\rhps$ is $d$-fitting in $\rwsx$.
					\item
						For every critical $d$-face $K$ in $\rhps$, the dot $(p, q)$ in the persistence diagram, associated to it via the bijective correspondence from Lemma~\ref{LEMMA: d-fitting subcomplexes of the reduced weighted simplex}(\ref{LEMMA: d-fitting subcomplexes of the reduced weighted simplex: weight bound}) (for $\scx = \rhps$ and $F = \rmst$), is the same as the label of $K$. In particular $p = w(K)$.
					\item
						$\rhps$ has the minimal total weight among all $d$-fitting $d$-spanning subcomplexes $\scx \subseteq \rwsx$.
				\end{enumerate}
			\end{theorem}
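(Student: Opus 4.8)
The plan is to prove the three items in order, letting the first two feed the third. Throughout write $\rmst$ for $\rmst[\alpha]$, $\rhps$ for $\rhps[\alpha]$, $\critl$ for $\critl[\alpha]$, and note that $\rhps \setminus \rmst = \critl$ consists only of $d$-faces, so $\rmst \subseteq \rhps$ is $d$-spanning in $\rwsx$ (using Theorem~\ref{THEOREM: Optimality of Minimal Spanning Trees}).

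\emph{Fittingness.} For $i \le d-2$ all three of $\rmst,\rhps,\rwsx$ share the full $\le(d-1)$-skeleton, hence the same $i$- and $(i+1)$-chains, so $\hm[i]{\rhps \hookrightarrow \rwsx}$ is an isomorphism by Lemma~\ref{LEMMA: spanning subcomplex}(\ref{LEMMA: spanning subcomplex: lower-dimensional fittingness}). For $i=d-1$ the composite $\hm[d-1]{\rmst}\to\hm[d-1]{\rhps}\to\hm[d-1]{\rwsx}$ is an isomorphism since $\rmst$ is $(d-1)$-fitting (Theorem~\ref{THEOREM: Optimality of Minimal Spanning Trees}), while the first arrow is surjective because passing from $\rmst$ to $\rhps$ only adjoins $d$-faces, i.e.\ only relations in degree $d-1$; hence both arrows are isomorphisms. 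In degree $d$ this is exactly Lemma~\ref{LEMMA: d isomorphism}. Together these give that $\rhps$ is $d$-fitting in $\rwsx$, which is item (\ref{THEOREM: fittingness and optimality of reduced skeleton: fittingness}).

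\emph{Labels equal dots.} Apply Lemma~\ref{LEMMA: d-fitting subcomplexes of the reduced weighted simplex} with $\scx = \rhps$ (a $d$-fitting $d$-spanning subcomplex, by the previous paragraph) and $F = \rmst$ (a $(d-1)$-fitting $d$-spanning $d$-forest which already has minimal total weight among such in $\rwsx$, a fortiori in $\rhps$). Item (\ref{LEMMA: d-fitting subcomplexes of the reduced weighted simplex: weight bound}) then gives a bijection between the critical $d$-faces of $\rhps$ --- which by Lemma~\ref{LEMMA: critical basis} are precisely the faces of $\critl$ --- and the dots $(p,q)$ of $\pdwsx$ with $p \le \alpha < q$, together with the estimate $p \le w(K) = \birth(K)$. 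To upgrade this to the identification of the dot with the full label $(\birth(K),\death(K))$, the plan is to show that the intervals $[\birth(K),\death(K))$, over all critical $d$-faces of positive lifespan, form the barcode of the persistence module $\beta \mapsto \hm[d]{\rwsx[\beta], \rmst[\beta]}$; since the quotient maps identify this module with $\beta \mapsto \hm[d]{\rwsx[\beta]}$ (Lemma~\ref{LEMMA: spanning subcomplex}(\ref{LEMMA: spanning subcomplex: connecting relative and absolute homology}) with $F = \rmst[\beta]$), whose barcode is $\pdwsx$ by definition, this matches labels with dots and in particular forces $p = w(K)$. The birth half is delivered by Lemma~\ref{LEMMA: critical basis} (the still-living critical faces give a basis of $\hm[d]{\rwsx[\beta],\rmst[\beta]}$ at every $\beta$) plus the estimate above; the death half is where Definition~\ref{def:deaths_critical_faces} was built to implement the elder rule, so that killing, at each value of $\im(w)$, the heaviest admissible family of critical classes reproduces the standard persistence pairing. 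Verifying this last point --- that the maximal-weight choice of leading variables in Algorithm~\ref{ALGORITHM: Death times of critical faces} yields exactly the interval decomposition of $\hm[d]{\rwsx,\rmst}$ --- is where I expect the real work to lie; it is cleanest to run by induction on the elements of $\im(w)$, comparing $\ker f$ at each stage with the boundaries newly created by the $(d+1)$-faces appearing there, and by uniqueness of the barcode this suffices.

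\emph{Optimality.} Let $\scx$ be any $d$-fitting $d$-spanning subcomplex of $\rwsx$. Faces of dimension $>d$ only add weight and preserve $d$-spanning, and one checks that the $d$-skeleton still contains enough $d$-faces to run the estimate below (its $d$-th Betti number is at least $\beta_d(\rwsx)$, and the surplus $d$-faces carry non-negative weight), so we may assume $\scx$ is a $d$-complex. By Lemma~\ref{LEMMA: d-fitting subcomplexes of the reduced weighted simplex} choose a minimal-weight $(d-1)$-fitting $d$-spanning $d$-forest $F \subseteq \scx$; then $\tw(F) \ge \tw(\rmst)$ by Theorem~\ref{THEOREM: Optimality of Minimal Spanning Trees}, and item (\ref{LEMMA: d-fitting subcomplexes of the reduced weighted simplex: weight bound}) gives $\sum_{S \in \scx \setminus F} w(S) \ge \sum_S p_S = \sum_{(p,q)\text{ alive at }\alpha} p$, the last equality because the $(p_S,q_S)$ exhaust the living dots. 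Running the same computation for $\rhps$ with $F = \rmst$ and invoking the case $p=w(K)$ of the second item yields $\tw(\rhps) = \tw(\rmst) + \sum_{K \in \critl} w(K) = \tw(\rmst) + \sum_{(p,q)\text{ alive at }\alpha} p$. Hence $\tw(\rhps) \le \tw(F) + \sum_{S \in \scx\setminus F} w(S) = \tw(\scx)$, which is item (\ref{THEOREM: fittingness and optimality of reduced skeleton: fittingness})'s successor. (The case $d=0$ is degenerate, with $\mst = \emptyset$ and the last critical $0$-face assigned death $\infty$, and is handled with the obvious minor modifications to Lemma~\ref{LEMMA: late relative homology} and the argument above.)
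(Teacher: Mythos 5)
Your arguments for items (1) and (3) largely track the paper, with one pleasant difference in (1): instead of the paper's explicit chain computation (which proves injectivity of $\hm[d-1]{\rhps \hookrightarrow \rwsx}$ by rewriting a bounding $d$-chain through the living critical faces via Lemma~\ref{LEMMA: critical basis}), you sandwich $\rhps$ between $\rmst$ and $\rwsx$: the composite $\hm[d-1]{\rmst}\to\hm[d-1]{\rhps}\to\hm[d-1]{\rwsx}$ is an isomorphism by Theorem~\ref{THEOREM: Optimality of Minimal Spanning Trees} and the first map is onto since only $d$-faces are adjoined, so both maps are isomorphisms. That is correct and shorter. The weight estimate in (3) is the paper's computation. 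Your preliminary reduction of an arbitrary competitor to a $d$-complex is not needed (the paper's notions of $d$-spanning and $d$-fitting are defined for $d$-subcomplexes) and is shaky as stated: the $d$-skeleton of a higher-dimensional $\scx$ need not be $d$-fitting, since discarding $(d{+}1)$-faces can inflate $H_d$, so Lemma~\ref{LEMMA: d-fitting subcomplexes of the reduced weighted simplex} would not apply to it.

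The genuine gap is item (2). You reduce it to the claim that the intervals $[\birth(K),\death(K))$ of the critical $d$-faces form the barcode of $\beta \mapsto \hm[d]{\rwsx[\beta],\rmst[\beta]} \ism \hm[d]{\rwsx[\beta]}$, i.e.\ that the elder-rule choice of leading variables in Definition~\ref{def:deaths_critical_faces} reproduces the persistence pairing, and then you explicitly leave that claim unverified (``where I expect the real work to lie''). But that claim \emph{is} the content of item (2): Example~\ref{Example:Kurlin} exists precisely because a careless assignment of death times makes the statement false, so it cannot be discharged by invoking uniqueness of the barcode without actually carrying out the induction over $\im(w)$ that you only gesture at; and even granting a multiset identity from barcode uniqueness, you would still need to match each individual $K$ with the dot assigned to it by the specific correspondence of Lemma~\ref{LEMMA: d-fitting subcomplexes of the reduced weighted simplex}(\ref{LEMMA: d-fitting subcomplexes of the reduced weighted simplex: weight bound}), in particular forcing $p = w(K)$ rather than only $p \leq w(K)$. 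Since your optimality argument in (3) invokes $p = w(K)$ for every living critical face, the gap propagates there as well. The paper's own proof of (2) is brief, but it rests on material already established --- Lemma~\ref{LEMMA: critical basis} (the living critical faces form a basis of $\hm[d]{\rwsx,\rmst}$ at every scale), the bound of Lemma~\ref{LEMMA: d-fitting subcomplexes of the reduced weighted simplex}(\ref{LEMMA: d-fitting subcomplexes of the reduced weighted simplex: weight bound}), and the way births and deaths were assigned --- whereas your write-up defers exactly the statement that was to be proved.
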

			
			\begin{proof}
				\begin{enumerate}
					\item
						Between Lemmas~\ref{LEMMA: spanning subcomplex}(\ref{LEMMA: spanning subcomplex: lower-dimensional fittingness}) and~\ref{LEMMA: d isomorphism} we only still need to check that the map $\hm[d-1]{\rhps \hookrightarrow \rwsx}$ is injective, or equivalently, that its kernel is trivial.
						
						Let $K_1, K_2, \ldots, K_s$ be critical $d$-faces living at $\alpha$ and let $K_{s+1}, \ldots, K_m$ be the remaining critical $d$-faces born before or at $\alpha$. Take such a cycle $z \in \cy[d-1]{\rhps}$ that $[z] = 0$ in $\hm[d-1]{\rwsx}$. This means there exists a chain $v \in \ch[d]{\rwsx}$ with $\partial_d{v} = z$. Write
						\[
							v = \sum_{i = 1}^m c_i K_i + u
						\]
						where $u \in \ch[d]{\rmst}$. Using Lemma~\ref{LEMMA: critical basis} and unpacking relative homology we can express each $K_i$ with $i > s$ as
						\[
							K_i = \Big(\sum_{l = 1}^s e_l K_l\Big) + u_i + \partial_{d+1}{t_i}
						\]
						where $u_i \in \ch[d]{\rmst}$ and $t_i \in \ch[d+1]{\rwsx}$. Hence
						\[
							v = \sum_{i = 1}^s c'_i K_i + u' + \partial_{d+1}{t'}
						\]
						for suitable $c'_i \in \field$, $u' \in \ch[d]{\rmst}$ and $t' \in \ch[d+1]{\rwsx}$. Set
						\[
							v' \dfeq \sum_{i = 1}^s c'_i K_i + u',
						\]
						so $v' \in \ch[d]{\rhps}$. Then
						\[
							\partial_d{v'} = \partial_d{v'} + \partial_d\partial_{d+1}{t'} = \partial_d{v} = z.
						\]
						We conclude that $[z] = 0$ in $\hm[d-1]{\rhps}$.
					\item
						Let $K_1, \ldots, K_m$ be critical $d$-faces in $\rhps$ and for each $K_i$ let $(p_i, q_i)$ be the dot in the persistence diagram $\pdwsx$, associated to it. By the assignment of birth and death times of critical faces, as well as the previous item, we see that a cycle representing a homology class associated to $K_i$ (the birth of which is $p_i$) is born exactly at the time $K_i$ appeared in the homologically persistent skeleton, \ie at $w(K_i)$. 
					\item
						Let $S_1, \ldots, S_m$ be $d$-faces in $\scx \setminus F$ and for each $S_i$ let $(p_i, q_i)$ be the dot in the persistence diagram $\pdwsx$, associated to it; we have $p_i \leq w(S_i)$ (Lemma~\ref{LEMMA: d-fitting subcomplexes of the reduced weighted simplex}). Taking into account the previous item, we conclude
						\[
							\tw(\scx) = \tw(F) + \sum_{i = 1}^m w(S_i) \geq \tw(F) + \sum_{i = 1}^m p_i \geq
						\]
						\[
							\geq \tw(\rmst) + \sum_{i = 1}^m p_i = \tw(\rmst) + \sum_{i = 1}^m w(K_i) = \tw(\rhps).
						\]
				\end{enumerate}
			\end{proof}

	%5===========
	\section{Conclusion}
		
		We introduced a $d$-dimensional homologically persistent skeleton solving the Skeletonization Problem from Subsection~\ref{sub:motivations} in an arbitrary dimension $d$.
		\begin{itemize}
			\item
				Given a filtration of complexes on a point cloud $\pc$, Theorem~\ref{THEOREM: Optimality of Minimal Spanning Trees}(3) proves the optimality of minimal spanning $d$-trees of the cloud $\pc$.
			\item
				Definition~\ref{def:HoPeS} introduces $\hps$ by adding to a minimal spanning $d$-tree all critical $d$-faces that represent persistent homology $d$-cycles of $\wsx$, hence $\hps$ visualizes the persistence directly on data.
			\item
				For any scale $\alpha$ by Theorem~\ref{THEOREM: fittingness and optimality of reduced skeleton} the full skeleton $\hps$ contains a reduced subcomplex $\rhps$, which has a minimal total weight among all $d$-subcomplexes containing $\skl[d-1]{\rwsx}$ such that the inclusion into $\rwsx$ induces isomorphisms in homology in all degrees up to $d$.
		\end{itemize}
		
		The independence of the Euler characteristic from homology coefficients has helped to prove all results for homology over an arbitrary field $\field$. 
		Will Theorems~\ref{THEOREM: Optimality of Minimal Spanning Trees} and~\ref{THEOREM: fittingness and optimality of reduced skeleton} hold over an arbitrary unital commutative ring $R$? The answer is no, at least not in the form as they are currently stated. Assume that the theorems hold for $R$. Note that the proof of Lemma~\ref{LEMMA: critical faces and relative homology} works for a general $R$, so $\hm[d]{\rwsx}{R} \ism \hm[d]{\rhps}{R} \ism \hm[d]{\rhps, \rmst}{R}$ are free $R$-modules. That is, the results can only work if the homology over $R$ of every finite simplicial complex in every dimension is free. This of course excludes all the usual non-field homology coefficients, including $\ZZ$.
				
Similarly to a minimal spanning tree, a higher-dimensional $\hps$ is arborescent and will require some pruning and fine optimization as was done in Figures 6-8 from \cite{K15}.
If a subcomplex $\hps_{\alpha}$ is considered for a fixed scale $\alpha$, one could remove higher-dimensional `branches' with a total weight less than $\alpha$.
If one needs to avoid a manual choice of $\alpha$, one can define derived subskeletons of $\hps$ as in Definition 14 of \cite{K15} that contain only critical faces contributing to cycles with a high enough persistence.
	
Algorithms~\ref{ALGORITHM: Minimal spanning tree} and \ref{alg:deaths} are based on standard linear algebra, hence in the worst case cubic in the size of a given simplicial complex.
Such complexes for real data are not arbitrary and linear algebra computations usually scale almost linearly in the size of a complex. 
	
	We have implemented an algorithm computing the homologically persistent skeleton in Mathematica. We look forward to finding collaborators interested in implementing the code in C++ and trying it out on real data. We intend to discuss these implementations and the computational complexity of algorithms in a follow-up paper.

Finally, we thank all reviewers of this paper for their valuable time and helpful suggestions. 
		
	\printbibliography
	
\end{document}